\documentclass[reqno,12pt, oneside]{amsart}

\usepackage{amssymb}
\usepackage[hidelinks]{hyperref}
\usepackage[textsize=footnotesize]{todonotes}
\usepackage{tikz}
\usetikzlibrary{cd}
\usepackage{float}
\usepackage{xcolor}
\usepackage{todonotes}
\usepackage{framed}

\theoremstyle{plain}
\newtheorem{theorem}{Theorem}[section]
\newtheorem{lemma}[theorem]{Lemma}
\newtheorem{definition}[theorem]{Definition}

\newtheorem{proposition}[theorem]{Proposition}
\newtheorem{remark}[theorem]{Remark}
\newtheorem{example}[theorem]{Example}
\newtheorem{question}[theorem]{Question}

\DeclareMathOperator{\var}{var}

\newcommand{\R}{\mathbb R}

\newcommand{\N}{\mathbb N}

\newcommand{\cP}{\mathcal P}
\newcommand{\cM}{\mathcal M}
\newcommand{\cG}{\mathcal G}
\newcommand{\Fin}{\mathrm{Fin}}
\newcommand{\Exh}{\mathrm{Exh}}
\newcommand{\FIN}{\mathrm{FIN}}
\newcommand{\EXH}{\mathrm{EXH}}

\newcommand{\BV}{\mathrm{BV}}

\newcommand{\norm}[1]{\left\|#1\right\|}

\newcommand{\abs}[1]{\lvert#1\rvert}

\newcommand{\zdef}{{\mathrel{\mathop:}=}}

\newcommand{\lnorm}[1]{\left\|#1\right\|_{A BV}}

\newcommand{\ABV}{{ABV}}
\newcommand{\BBV}{{BBV}}

\title[Exh subspaces of submeasure BV spaces]{Exh subspaces of bounded variation spaces generated by submeasures}

\author[J.~Gulgowski]{Jacek Gulgowski}
\address[J.~Gulgowski]{University of Warsaw,
Center of Trustworthy AI for Life Sciences - International Research Agendas Programme, Warsaw, Poland}
\email{j.gulgowski@uw.edu.pl}

\author[A.~Kwela]{Adam Kwela}
\address[A.~Kwela]{Institute of Mathematics\\ Faculty of Mathematics\\ Physics and Informatics\\ University of Gda\'{n}sk\\ ul.~Wita  Stwosza 57\\ 80-308 Gda\'{n}sk\\ Poland}
\email{Adam.Kwela@ug.edu.pl}
\urladdr{https://mat.ug.edu.pl/~akwela}

\author[P.~Sawicki]{Paweł Sawicki}
\address[P.~Sawicki]{Institute of Mathematics\\ Faculty of Mathematics, Physics and Informatics\\ University of Gda\'{n}sk\\ ul. Wita Stwosza 57\\ 80-308 Gda\'{n}sk\\ Poland}
\email{p.a.sawicki@wp.pl}

\author[J.~Tryba]{Jacek Tryba}
\address[J.~Tryba]{Institute of Mathematics\\ Faculty of Mathematics, Physics and Informatics\\ University of Gda\'{n}sk\\ ul. Wita Stwosza 57\\ 80-308 Gda\'{n}sk\\ Poland}
\email{Jacek.Tryba@ug.edu.pl}

\subjclass[2010]{Primary: 
26A45.
Secondary:
11B05, 
40A05.
}

\keywords{
Variation in the sense of Jordan, 
Waterman sequence, 
$\Lambda$-variation, 
Chanturia classes,
ideal,
summable ideal, 
simple density ideal.
}

\begin{document}

\begin{abstract}
In the present paper we are building one more bridge between the theory of ideals on the set of natural numbers and the theory of spaces of functions of bounded variation. It is already known that $F_\sigma$ ideals (that is, ideals of the form $\Fin(\phi)$, where $\phi$ is a lower semicontinuous submeasure on $\mathbb{N}$) lead to a nice generalization of different concepts from the theory of functions of bounded variation (Waterman variation and Chanturya classes). In this paper we show that analytic P-ideals (that is, ideals of the form $\Exh(\phi)$, where $\phi$ is a lower semicontinuous submeasure on $\mathbb{N}$) lead to a generalization of spaces of functions continuous in $\Lambda$-variation. The properties of this generalization are investigated.
\end{abstract}

\maketitle

\section{Introduction}

In the recent paper by Borodulin-Nadzieja and Farkas \cite{FBN} the new concept of sequence spaces $\FIN(\phi)$ and $\EXH(\phi)$ generated by a submeasure $\phi$ defined on the set of natural numbers was introduced. This concept, on the one hand, appears to be an interesting bridge between the theory of Banach sequence spaces and the theory of ideals of countable sets. On the other hand, it shows its power of generalization: for example it is a generalization of combinatorial Banach spaces which are a very complex creatures on its own (see e.g. \cite{comb:zoo}). 

In the paper \cite{GKT} it was shown that the sequence spaces $\FIN(\phi)$ may be used to define the function spaces $\BV(\phi)$ of the bounded-variation type. Moreover, it was shown that the spaces $\BV(\phi)$ are actually unifying two different concepts appearing in the theory  of bounded-variation spaces: Waterman $\Lambda$-variation  (introduced in \cite{Wat1}) and Chanturia classes (introduced in \cite{Cha74}). When we started to work on the mentioned generalization we almost immediately realized that  the way how the sequence space $\EXH(\phi)$ is defined may be introduced in the universe of bounded-variation functions. Actually, when $\phi$ is a measure, with measures of singletons $\phi(\{n\})=a_n$ forming a Waterman sequence, the space generated according to $\EXH$-pattern corresponds to the space of functions known as {\em continuous in $\Lambda$-variation} (introduced by Waterman in \cite[Section 4]{Wat1976} and later studied by many authors -- for details we refer to Subsection \ref{sec:cont:LBV} below and the references mentioned therein). 

In the present paper we introduce the concept of spaces $\BV_0(\phi)$ of functions continuous in submeasure variation, which not only is a generalization of spaces of functions continuous in $\Lambda$-variation, but also gives nontrivial subspaces of Chanturia classes. Some interesting observations are made about these spaces and some interesting open problems are formulated. We show that this concept is highly nontrivial and the tools developed for Waterman spaces are not powerful enough to work in this more general case.    

\section{Notation and preliminaries}

\subsection{Basic notation}
The set of natural numbers will be denoted by $\N = \{1,2,...\}$ and we have $\N_0 =\N\cup\{0\} = \{0,1,2,...\}$. By ${\mathcal P}(\N)$ we will denote the family of all subsets of $\N$.

Let us denote the unit interval by $I=[0,1]$. All function spaces considered in this paper will consist of functions $x\colon I\to\R$. Moreover, by ${\mathcal P}_I$ we denote the set of all sequences  of nonoverlapping, closed  subintervals $(I_1, I_2, \ldots, I_N, \ldots)$ of $I$. Let us recall that the intervals are called {\em nonoverlapping} if for each pair of intervals their intersection has at most one element. The intervals may be {\em degenerate}, i.e. it may happen that $I_n$ consists of only one point. 

For a function $x\colon I\to\R$ and any interval $J=[a,b]\subset I$ we will write
\[
\abs{x(J)} = \abs{x([a,b])} = |x(b)-x(a)|.
\]
\subsection{Functions continuous in $\Lambda$-variation}\label{sec:cont:LBV}

In the theory of $\Lambda$-variation spaces the concept of functions continuous in $\Lambda$-variation appeared. We are going to remind this concept, but first let us recall the idea of $\Lambda$-variation of the function $x:I\to\R$. This concept was introduced by Waterman in \cite{Wat1}. Since then the functions of bounded $\Lambda$-variation were intensively studied by many authors -- for an overview we refer to \cite{ABM}.

Let us assume that $A=(a_n)_{n\in\N}$ is such non-increasing sequence of positive real numbers that $\sum_{n=1}^{\infty} a_n = +\infty$. We call such sequence \emph{a~Waterman sequence}. If additionally $\lim_{n\to +\infty}a_n = 0$, we say that the sequence $A$ is a \emph{proper Waterman sequence}.

 \begin{definition}
  Let $A = (a_n)_{n\in\mathbb{N}}$ be a Waterman sequence and let $x\colon I\to\mathbb{R}$. We say that $x$ is of bounded $A$-variation  if there exists a positive  constant $M$ such that for any sequence of nonoverlapping  subintervals $(I_1, I_2, I_3, \ldots) \in {\mathcal P}_I$, the following inequality holds
  \[
   \sum_{n=1}^{+\infty} a_n\abs{x(I_n)} \leq M.
  \]
  The supremum of the above sums, taken over the family ${\mathcal P}_I$ of all sequences of nonoverlapping subintervals of $I$, is called the $A$-variation of $x$  and it is denoted by $\var_A(x)$. 
 \end{definition}

The space of all  functions defined on the interval $I$ and  of bounded $A$-variation, endowed with the norm $\lnorm{x} \zdef \abs{x(0)} + \var_A(x)$ forms a Banach space $\ABV(I)$ (see \cite[Section 3]{Wat1976}) called the \emph{Waterman space}.

The spaces $\ABV(I)$, treated as linear spaces, are proper subspaces of the linear space $B(I)$ of all bounded functions $x\colon I\to\R$. We assume the space $B(I)$ is equipped with the standard supremum norm
\[
\norm{x}_\infty = \sup_{t\in I} |x(t)|.
\]
With this assumption each space $\ABV(I)$ is continuously embedded in $B(I)$.

Given a positive integer $m\in\N$, the omission of the first $m$
terms of the Waterman sequence $A=(a_n)$ gives us a new  Waterman sequence $a^{(m)}_n = a_{m+n}$ that will be denoted by $A_{(m)}$.

\begin{definition}\label{def:cont:Lambda:var}
    The function $x\colon I\to\R$ is said to be {\em continuous in $A$-variation, denoted by $x\in ABV_c(I)$, if 
    \[
    \lim_{m\to+\infty} \var_{A_{(m)}}(x) = 0.
    \]}
\end{definition}

The properties of these functions were investigated in \cite{Wisniowski-separability}. The most important observation related to such functions is given in Theorem \ref{thr1} below. But first we need to give a few new defintions. Let us start with denoting the subset of $\ABV(I)$ ($\ABV_c(I)$) consisting of all functions which are also continuous by $C\ABV(I)$ ($C\ABV_c(I)$). We will also need the following definition.

\begin{definition}[cf. {\cite[Theorem 3.1]{Wisniowski-separability}}]
If $A$ is a Waterman sequence, then
$$S_A=\limsup_{n\to\infty}\frac{\sum_{i=1}^{2n}a_i}{\sum_{i=1}^{n}a_i}$$
is called the Shao-Sablin index of $A$.
\end{definition}

\begin{theorem}[cf. \cite{Wisniowski-separability}]
\label{thr1}
For every proper Waterman sequence $A$, the following statements are equivalent:
\begin{enumerate}
\item the space $C\ABV(I)$ is separable;
\item $C\ABV_c(I)=C\ABV(I)$;
\item $\ABV_c(I)=\ABV(I)$;
\item $S_A<2.$
\end{enumerate}
\end{theorem}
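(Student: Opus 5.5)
I would prove the cycle $(4)\Rightarrow(3)\Rightarrow(2)\Rightarrow(1)\Rightarrow(4)$, keeping in mind that $(3)\Rightarrow(2)$ is trivial: intersect both sides with the continuous functions.

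\textbf{Step $(4)\Rightarrow(3)$.} This is the heart of the matter. Fix $x\in\ABV(I)$ and put $\sigma_n=\sum_{i=1}^n a_i$. The condition $S_A<2$ gives $q<2$ and $n_0$ with $\sigma_{2n}\le q\sigma_n$ for $n\ge n_0$. Since $a_n\to0$, this forces $a_{n+1}+\dots+a_{2n}\le (q-1)\sigma_n$, so the tail sums grow slowly.

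Suppose $\var_{A_{(m)}}(x)\not\to0$. Then some $\varepsilon>0$ and, for infinitely many $m$, families $(I_k)$ satisfy $\sum_k a_{m+k}\abs{x(I_k)}\ge\varepsilon$. Because $a_{m+k}\le a_m\to0$ and $x$ has bounded $A$-variation, only many intervals with substantial oscillation can produce this. I would then glue disjoint blocks of such families, which is possible because $x$ is regulated (elements of $\ABV$ have one-sided limits), so each family can be localized near finitely many points. The goal is to build a single family whose $A$-variation exceeds $\var_A(x)$.

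The key estimate, following Wiśniowski's argument, is a doubling one. If a family of $2n$ intervals carries weight $\ge\varepsilon$ with the shifted weights $a_{m+k}$, then, rearranged against $a_1,\dots,a_{2n}$, it gains a factor of order $\sigma_{2n}/\sigma_n$ relative to splitting it into halves. Iterating with $S_A<2$ should make the sums blow up, giving a contradiction.

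This is the step I expect to be the main obstacle. I would try first to reduce to the case where $x$ is a step-like function with jumps at a sequence of points, handling the continuous part by approximation.

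\textbf{Step $(2)\Rightarrow(1)$.} Show that $C\ABV_c(I)$ is separable. For $x\in C\ABV_c(I)$, choose $m$ with $\var_{A_{(m)}}(x)<\varepsilon$ and let $p$ be a piecewise linear interpolant of $x$ on a fine partition with rational nodes and values. Then $\var_A(x-p)$ is controlled by the first $m$ weights times the oscillation of $x$ on small intervals, which is small by uniform continuity, plus a tail term bounded by $\var_{A_{(m)}}$. These rational piecewise linear functions form a countable dense set.

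\textbf{Step $(1)\Rightarrow(4)$.} Argue by contraposition, assuming $S_A=2$. Along a sequence $n_j$ with $\sigma_{2n_j}/\sigma_{n_j}\to2$, build continuous "tent" functions $x_\theta$, for $\theta\in\{0,1\}^{\N}$, supported on disjoint intervals. Each block consists of $n_j$ tents of height $h_j\approx 1/\sigma_{n_j}$.

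The ratio condition then yields $\var_A(x_\theta-x_{\theta'})\ge c>0$ whenever $\theta\ne\theta'$, while each $x_\theta$ stays in $C\ABV$. This gives an uncountable $c$-separated set, so $C\ABV(I)$ is not separable.
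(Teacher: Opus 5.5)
The paper gives no proof of this theorem (it is quoted from Wi\'sniowski), so your cycle has to stand on its own. The step $(3)\Rightarrow(2)$ is trivial. The step $(2)\Rightarrow(1)$ is fine once you add that $\var_{A_{(m)}}(p)\le\var_{A_{(m)}}(x)$ for the polygonal interpolant $p$; this holds because for a piecewise linear function the supremum may be taken over families with endpoints at the nodes.

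\textbf{The step $(1)\Rightarrow(4)$ cannot work with tents.} Write $x_\theta=\sum_j\theta_jB_j$, where $B_j$ is the $j$-th block of tents. For nonnegative tents with disjoint supports, the reduction to points of varying monotonicity gives a bound: any $n$ nonoverlapping intervals have total oscillation at most the sum of the $n$ largest half-heights. So the $A$-variation of any sub-collection of tents is attained by its family of tent halves.

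Suppose $x_{\mathbf 1}\in\ABV$, and let $r_1\ge r_2\ge\cdots$ be the decreasing rearrangement of all half-heights. Then $\sum_k a_kr_k<\infty$ and $r_k\to0$. Let $e_1<e_2<\cdots$ be the positions in $r$ of the half-heights of $B_j$. Then $e_i\ge i$, and $e_1\to\infty$ as $j\to\infty$, so for every $K$
\[
\var_A(B_j)=\sum_i a_ir_{e_i}\le Ka_1r_{e_1}+\sum_{i>K}a_ir_i\longrightarrow0\qquad(j\to\infty).
\]
Since $x_\theta-x_{\theta'}=\pm B_j$ when $\theta,\theta'$ differ only at $j$, no uniform $c$ exists. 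The same estimate applied to $\sum_{j\ge J}\pm B_j$ (with a factor $2$ for intervals joining an upward and a downward tent) shows that $\theta\mapsto x_\theta$ is continuous on $\{0,1\}^{\N}$. Its image is therefore compact, so passing to a subfamily does not help.

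With your heights you get $\var_A(B_j)=h_j\sigma_{2n_j}\approx\sigma_{2n_j}/\sigma_{n_j}\ge1$, so in fact $x_{\mathbf 1}\notin\ABV$. Note that this argument never uses $S_A$. Whenever one admissible family majorizes all others (in the sense of partial sums of decreasing rearrangements), the same tail estimate gives $\var_{A_{(m)}}(x)\to0$. A witness for $S_A=2$ therefore needs nested oscillations, where the fine families carrying $\var_{A_{(m)}}$ cannot be combined with the coarse ones carrying $\var_A$. Producing such an element of $C\ABV\setminus C\ABV_c$, and then an uncountable separated family from it, is the real content of this direction, and it is missing.

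\textbf{Your $(4)\Rightarrow(3)$ is a plan rather than a proof, and its heuristic points the wrong way.} At most $K(x,\delta)<\infty$ nonoverlapping intervals have $\abs{x(J)}\ge\delta$. They contribute at most $2K(x,\delta)\norm{x}_\infty a_{m+1}\to0$ to $\var_{A_{(m)}}(x)$. So the shifted variation is carried by many intervals of \emph{small} oscillation (this is Theorem~\ref{th:var0:equiv}). Regulatedness only controls the finitely many large oscillations and does not localize these small ones.

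The gluing is exactly where $S_A<2$ must enter. By the remark above, if the fine families for the various $m$ could be merged with a near-optimal family for $\var_A(x)$ into one admissible family, then $\var_{A_{(m)}}(x)\to0$ would follow for every proper Waterman sequence. That is false when $S_A=2$. You have to control what is lost when such fine families are combined with a near-optimal family for $\var_A(x)$. Then you must show, using $\sigma_{2n}\le q\sigma_n$ with $q<2$, that a definite gain survives, contradicting the finiteness of $\var_A(x)$. The proposed doubling estimate does not do this: $S_A<2$ keeps $\sigma_{2n}/\sigma_n$ away from $2$ and cannot by itself make sums blow up.
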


\subsection{Bounded variation spaces generated by submeasures}

A function $\phi:\mathcal{P}(\N)\to[0,\infty]$ is called a \emph{submeasure} if $\phi(\emptyset)=0$, $\phi(\{n\})<\infty$ for every $n\in\N$, and 
\[
\phi(C)\leq\phi(C\cup D)\leq\phi(C)+\phi(D)
\]
for all $C,D\subseteq\N$. A submeasure $\phi$ is \emph{lower semicontinuous} (lsc, in short) if $\phi(C)=\lim_{n\to\infty}\phi(C\cap\{1,2,\ldots,n\})$ for each $C\subseteq\N$.

An lsc submeasure $\phi$ is \emph{non-pathological}, if
$$\phi(C)=\sup\{\mu(C):\ \mu\text{ is a measure such that }\mu\leq\phi\},$$
for all $C\subseteq\N$ (here $\mu\leq\phi$ means that $\mu(D)\leq\phi(D)$ for all $D\subseteq\N$). Not every lsc submeasure is non-pathological -- see \cite[Section~1.9]{farah-book}, \cite[Theorem~4.12]{ft-mazur}, \cite[Section~6.2]{marciszewski-sobota}, \cite[Section~3]{MezaPat} or \cite[Theorem~4.7]{tryba-gendensity} for such examples. 

Given an lsc submeasure $\phi$, let $\mathcal{M}_\phi$ be the family of all measures $\mu$ on $\N$ such that $\mu\leq\phi$. Then, by definition, $\phi$ is non-pathological if and only if $\phi(C)=\sup\{\mu(C): \mu\in\mathcal{M}_\phi\}$ for all $C\subseteq\N$.

For lsc submeasure $\phi$ we may naturally define two sequence spaces (cf. \cite{FBN}).

\begin{definition}\label{def:seq:spaces}
Let $\phi$ be a non-pathological lsc submeasure. Define a function $\hat\phi:\R^\N\to[0,\infty]$ by
\[
\hat\phi(x)=\sup\left\{\sum_{n\in\N}\mu(\{n\})|x_n|: \mu\in\mathcal{M}_\phi\right\}
\]
for all $x=(x_n)_{n\in\N}\in\R^\N$. Define also
\[
\FIN(\phi)=\left\{x\in\R^\N: \hat\phi(x)<\infty\right\};
\]
\[
\EXH(\phi)=\left\{x\in\R^\N: \lim_{n\to\infty}\hat\phi(x\cdot\chi_{\{n,n+1,\ldots\}})=0\right\}.
\]
\end{definition}

And now we are ready to define the submeasure-generated variation of a function $x\colon I\to\R$. 

\begin{definition}
Let $\phi$ be a non-pathological lsc submeasure. For $J=(J_n)\in\mathcal{P}_I$ denote by $x(J)$ the sequence $(|x(J_n)|)$. Define
\[
\BV(\phi)=\left\{x\in B(I): \sup_{J\in\mathcal{P}_I}\hat\phi(x(J))<\infty\right\}.
\]
\end{definition}

Let us denote
\[
\var_\phi(x) = \sup_{J\in\mathcal{P}_I}\hat\phi(x(J)).
\]

This concept was defined in \cite{GKT}, some other properties of this idea were later investigated in \cite{GKT2}. This definition has proven its power to generalize concepts so different as Waterman spaces and Chanturia classes. The definition of generalized variation related to submeasure $\phi$ is naturally related to the sequence space $\FIN(\phi)$. This, since the moment the idea was born, opened the question if a similar construction may be started with the sequence space $\EXH(\phi)$. We are going to define the class of subspaces of $\BV(\phi)$ which are the generalization of the spaces of functions continuous in $\Lambda$-variation as described in the subsection \ref{sec:cont:LBV}.

\section{Functions continuous in submeasure variation}

We will now try to redefine the concept of functions continuous in $A$-variation to the more general concept of  functions continuous in submeasure-generated variation.

Let us now define the sequence of submeasures $(\phi_{(m)})$ given by the formula
\begin{equation*}
\phi_{(m)}(C) = \phi(C \cap \{m+1,m+2,...\}).
\end{equation*}
for $C\subseteq \N$ and fixed $m\in\N_0$.

It is obvious that if $\phi$ is a non-pathological lsc submeasure then so is $\phi_{(m)}$, for any $m\in\N_0$.

\begin{definition}
Let $\phi$ be a non-pathological lsc submeasure and $x\colon I\to\R$. Then $x$ is said to be {\em continuous in $\phi$-variation, denoted by $x\in \BV_0(\phi)$, if 
    \[
    \lim_{m\to+\infty} \var_{\phi_{(m)}}(x) = 0.
    \]}
\end{definition}

The idea given above is a direct translation of the concept of functions continuous in $\Lambda$-variation -- this was given in Definition \ref{def:cont:Lambda:var} and there is a characterization for the space of such functions given in Theorem \ref{thr1} above.

Let us start with a simple observation.

\begin{remark}
\label{rem:M_phi_m}
Note that \[\mathcal{M}_{\phi_{(m)}}=\{\mu\in\mathcal{M}_{\phi}: \mu(\{1,2,\ldots,m\})=0\}\]
for every $m\in\N_0$. Indeed, if $\mu\in\mathcal{M}_{\phi_{(m)}}$ then $\mu(\{1,2,\ldots,m\})\leq\phi_{(m)}(\{1,2,\ldots,m\})=0$ and $\mu(C)\leq\phi_{(m)}(C)=\phi(C\cap\{m+1,m+2,\ldots\})\leq\phi(C)$ for all $C\subseteq\N$. On the other hand, if $\mu\in\mathcal{M}_{\phi}$ is such that $\mu(\{1,2,\ldots,m\})=0$ then $\mu(C)=\mu(C\cap\{m+1,m+2,\ldots\})\leq\phi(C\cap\{m+1,m+2,\ldots\})=\phi_{(m)}(C)$ for all $C\subseteq\N$. 
\end{remark}

\section{Equivalent definitions}

Recall that the sequence of submeasures $(\phi_{(m)})$ is given by the formula
\begin{equation*}
\phi_{(m)}(C) = \phi(C \cap \{m+1,m+2,...\}).
\end{equation*}
for $C\subseteq \N$ and fixed $m\in\N_0$.

An alternative definition would be:
Let us now define the sequence of submeasures $(\phi_{(m)}^a)$ given by the formula
\[
\phi_{(m)}^a(C) = \phi(C + m) = \phi(\{ n+m : n\in C\}).
\]
for $C\subseteq \N$ and $m\in\N_0$.

As we will see in Proposition \ref{prop:var_phi=var_phi^a}, both approaches lead to the same definition of $\BV_0(\phi)$.

The first attitude directly refers to the definition of $\EXH(\phi)$ (see Definition \ref{def:seq:spaces} above), because
\[
\hat\phi(x\cdot\chi_{\{m+1,n+2,\ldots\}})= \hat\phi_{(m)}(x).
\]
On the other hand, the second one is written in the spirit of the Definition \ref{def:cont:Lambda:var} of functions continuous in $A$-variation: when $\phi$ is a measure given by a Waterman sequence $A$ then the measure $\phi_{(m)}^a$ is given by the Waterman sequence $A_{(m)}$.

The difference between the two definitions may be expressed by the following observation:
\[
\phi_{(m)}(\{1,...,k\}) = \phi(\{m+1,...,k\}),
\]
\[
\phi_{(m)}^a(\{1,...,k\}) = \phi(\{m+1,...,m+k\}),
\]
for $k>m$. So the submeasure $\phi_{(m)}$ is supported on some subset of the set $\{m+1,m+2,...\}$ and it may never be supported on the entire $\N$, while the submeasure $\phi_{(m)}^a$ may be supported on the entire $\N$. The other difference is that the definition of $\phi_{(m)}$ seems to be {\em more stable}: let us take an infinite subset $C$ of even numbers together with a submeasure $\phi$ supported on the set of even numbers and such that for any even $n$ there is $\phi(\{n\})>0$. Then for each $m\in\N$ we have $\phi_{(m)}(C)>0$, while $\phi_{(m)}^a(C)=0$ for all odd $m$.
Note that for such a submeasure, if we take such sequence $x$ that $x_n=1$ for odd $n$ and $x_n=0$ for even $n$, then $x\in\FIN(\phi)$, $\lim_{m\to+\infty} \hat\phi_{(m)}(x) = 0$ while  $\limsup_{m\to+\infty} \hat\phi^a_{(m)}(x) = \lim_{m\to\infty} \phi(\N\setminus m)$, which can be infinite for e.g. $\phi(A)=\sum_{n\in A\cap 2\N}\frac{1}{n}$.

\begin{proposition}
\label{prop:var_phi=var_phi^a}
Let $\phi$ be a non-pathological lsc submeasure and $x\in B(I)$. Then
$x\in \BV_0(\phi)$ if and only if \[
    \lim_{m\to+\infty} \var_{\phi^a_{(m)}}(x) = 0.
    \]
\end{proposition}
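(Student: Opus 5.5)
The plan is to prove the stronger statement that $\var_{\phi_{(m)}}(x)=\var_{\phi^a_{(m)}}(x)$ for every $m\in\N_0$ and every $x\in B(I)$. The proposition then follows immediately. The idea is that $\phi^a_{(m)}$ is just $\phi_{(m)}$ shifted $m$ places to the left, and that shifting a sequence of intervals costs nothing, since we may pad it with degenerate intervals or discard initial intervals.

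\emph{Step 1 (measures).} I would first show that $\mu\mapsto\nu$, where $\nu(\{k\})=\mu(\{k-m\})$ for $k>m$ and $\nu(\{k\})=0$ for $k\le m$, is a bijection from $\mathcal{M}_{\phi^a_{(m)}}$ onto $\mathcal{M}_{\phi_{(m)}}$. Take $\mu\le\phi^a_{(m)}$ and $D\subseteq\N$. Put $C=(D\cap\{m+1,\ldots\})-m$. Then
\[
\nu(D)=\mu(C)\le\phi^a_{(m)}(C)=\phi(D\cap\{m+1,\ldots\})=\phi_{(m)}(D).
\]
Conversely, let $\nu\in\mathcal{M}_{\phi_{(m)}}$. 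By Remark \ref{rem:M_phi_m}, $\nu$ vanishes on $\{1,\ldots,m\}$, so $\mu(C)\zdef\nu(C+m)$ recovers it. Moreover
\[
\mu(C)=\nu(C+m)\le\phi_{(m)}(C+m)=\phi(C+m)=\phi^a_{(m)}(C).
\]
Consequently, for $y\in\R^\N$ we get $\hat\phi^a_{(m)}(y)=\hat\phi_{(m)}(z)$, where $z$ is given by $z_k=0$ for $k\le m$ and $z_{k}=y_{k-m}$ for $k>m$. In addition, $\hat\phi_{(m)}(w)$ depends only on the coordinates $w_k$ with $k>m$.

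\emph{Step 2 (intervals).} First I would prove $\var_{\phi^a_{(m)}}(x)\le\var_{\phi_{(m)}}(x)$. Given $J=(J_n)\in\mathcal{P}_I$, form $J'$ by prepending $m$ copies of a degenerate interval $\{t\}$ to $J$. The result still lies in $\mathcal{P}_I$, because a singleton meets any interval in at most one point. The degenerate intervals contribute $\abs{x(\{t\})}=0$, so $x(J')=z$ is exactly the shifted sequence of $x(J)$. Hence $\hat\phi^a_{(m)}(x(J))=\hat\phi_{(m)}(x(J'))\le\var_{\phi_{(m)}}(x)$.

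For the reverse inequality, take $J\in\mathcal{P}_I$ and drop its first $m$ intervals, giving $J''=(J_{m+1},J_{m+2},\ldots)\in\mathcal{P}_I$. Since $\hat\phi_{(m)}$ ignores the first $m$ coordinates, Step 1 gives $\hat\phi_{(m)}(x(J))=\hat\phi^a_{(m)}(x(J''))\le\var_{\phi^a_{(m)}}(x)$.

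Taking suprema yields equality of the two variations for every $m$, so the two limits vanish simultaneously. The only delicate point is the bookkeeping in Step 1: one must check that the sup defining $\hat\phi^a_{(m)}$ runs over exactly the shifted measures, using only the definition of $\mathcal{M}$ and Remark \ref{rem:M_phi_m}. Non-pathologicity of $\phi^a_{(m)}$ is never needed, since $\hat{\ }$ is defined directly through $\mathcal{M}$.
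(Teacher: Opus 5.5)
Your proposal is correct and follows essentially the same route as the paper. Both prove $\var_{\phi_{(m)}}(x)=\var_{\phi^a_{(m)}}(x)$ for each $m$ by shifting measures by $m$ places (using Remark \ref{rem:M_phi_m}) and correspondingly dropping the first $m$ intervals or padding with $m$ degenerate ones. Your bijection $\mathcal{M}_{\phi^a_{(m)}}\to\mathcal{M}_{\phi_{(m)}}$ and the identity $\hat\phi^a_{(m)}(y)=\hat\phi_{(m)}(z)$ are just a tidier packaging of the paper's pairwise construction of $(\hat J,\hat\mu)$ from $(J,\mu)$.
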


\begin{proof}
    Let $m\in\mathbb{N}$. In order to prove that $\var_{\phi_{(m)}}(x)\leq \var_{\phi^a_{(m)}}(x)$, we will show that for every $J=(J_1,J_2,\ldots)\in\mathcal{P}_I$ and $\mu\in\mathcal{M}_{\phi_{(m)}}$ there are $\hat{J}=(\hat{J}_1,\hat{J}_2,\ldots)\in\mathcal{P}_I$ and $\hat{\mu}\in\mathcal{M}_{\phi^a_{(m)}}$ such that \[\sum_{n\in\mathbb{N}}\mu(\{n\})|x(J_n)|=\sum_{n\in\mathbb{N}}\hat{\mu}(\{n\})|x(\hat{J}_n)|.\]
    Fix $J=(J_1,J_2,\ldots)\in\mathcal{P}_I$ and $\mu\in\mathcal{M}_{\phi_{(m)}}$. Define $\hat{J}_n=J_{m+n}$ for all $n\in\mathbb{N}$. Let also $\hat{\mu}$ be the measure given by $\hat{\mu}(\{n\})=\mu(\{n+m\})$ for all $n\in\mathbb{N}$. Observe that $\hat{\mu}\in\mathcal{M}_{\phi^a_{(m)}}$, since\[
    \hat{\mu}(C)=\mu(C+m)\leq\phi(C+m)=\phi^a_{(m)}(C)
    \]
    for all $C\subseteq\mathbb{N}$. Moreover,
    \begin{equation*}
\begin{split}
\sum_{n\in\mathbb{N}}\mu(\{n\})|x(J_n)| & =\sum_{n>m}\mu(\{n\})|x(J_n)|=\sum_{k\in\mathbb{N}}\mu(\{k+m\})|x(J_{k+m})| \\
          & =\sum_{k\in\mathbb{N}}\hat{\mu}(\{k\})|x(\hat{J}_k)|.
\end{split}
\end{equation*}

    Now we will show that that $\var_{\phi_{(m)}}(x)\geq \var_{\phi^a_{(m)}}(x)$. Fix any $\hat{J}=(\hat{J}_1,\hat{J}_2,\ldots)\in\mathcal{P}_I$ and $\hat{\mu}\in\mathcal{M}_{\phi^a_{(m)}}$. Let $J_n=\hat{J}_{n-m}$ for $n>m$ and $J_n$ be degenerate for $n\leq m$. Similarly, define $\mu$ as the measure given by $\mu(\{n\})=\hat{\mu}(\{n-m\})$ for $n>m$ and $\mu(\{n\})=0$ for $n\leq m$. Observe that $\mu\in\mathcal{M}$, since for every $C\subseteq\mathbb{N}$ we have
    \[
    \mu(C)=\mu(C_m)=\hat{\mu}(C_m-m)\leq\phi^a_{(m)}(C_m-m)=\phi(C_m)\leq\phi(C),
    \]
    where $C_m=C\cap\{m+1,m+2,\ldots\}$. As $\mu(\{n\})=0$ for $n\leq m$, we obtain $\mu\in\mathcal{M}_{\phi_{(m)}}$ (see Remark \ref{rem:M_phi_m}). Moreover,
    \begin{equation*}
\begin{split}
\sum_{k\in\mathbb{N}}\hat{\mu}(\{k\})|x(\hat{J}_k)| & =\sum_{k\in\mathbb{N}}\mu(\{k+m\})|x(J_{k+m})|=\sum_{n>m}\mu(\{n\})|x(J_{n})| \\
          & =\sum_{n\in\mathbb{N}}\mu(\{n\})|x(J_{n})|.
\end{split}
\end{equation*}
\end{proof}

Now we move to the next equivalent way of defining $\BV_0(\phi)$. We will need the following concept, which is a generalization of classes defined in \cite[Definition 1]{PW1} as Wiener $\Lambda$-variation.

First let us fix $\delta>0$ and a function $x\colon I\to\R$. Then by  $\cP_I^\delta(x)$ let us denote the family of all sequences of nonoverlapping subintervals $[a,b]$ of $I$ such that $|x(b)-x(a)|\leq\delta$. Let us then denote

\[
\var_\phi^\delta(x) = \sup_{J\in\mathcal{P}_I^\delta(x)}\hat\phi(x(J)).
\]
Obviously $\var_\phi^\delta(x)\leq \var_\phi(x)$. We may also define
\[
\var_\phi^0(x) = \lim_{\delta\to 0^+} \var_\phi^\delta(x).
\]
One can easily see that the family of all $x\colon I\to\R$ satisfying $\var_\phi^0(x) = 0$ is a linear subspace of $\BV(\phi)$.

The following result is actually \cite[Proposition 5]{PW1} rephrased to the language of submeasure-generated variation spaces. Note that the assumption $\phi(\N)=\infty$ is actually equivalent to $\BV(\phi)\neq B(I)$ (see \cite[Remark 3.13]{GKT}). As we will see in Proposition \ref{prop:ext2}, the second assumption (i.e. $\lim_{m\to\infty} \phi(\{m\})=0$) guarantees that there are non-constant functions in $\BV_0(\phi)$.

\begin{theorem}\label{th:var0:equiv}
Assume that $\phi$ is a non-pathological lsc submeasure satisfying $\phi(\N)=\infty$ and $\lim_{m\to\infty} \phi(\{m\})=0$. Then for every $x\in \BV(\phi)$ we have
\[
\var_\phi^0(x) = \lim_{m\to +\infty} \var_{\phi_{(m)}}(x).
\] 
In particular, 
\[
\BV_0(\phi)=\{x\in \BV(\phi): \var_\phi^0(x)=0\}.
\]
\end{theorem}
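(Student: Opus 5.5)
We prove the two inequalities between $\var_\phi^0(x)$ and $L\zdef\lim_{m\to+\infty}\var_{\phi_{(m)}}(x)$. The limit $L$ exists because $\mathcal{M}_{\phi_{(m+1)}}\subseteq\mathcal{M}_{\phi_{(m)}}$ by Remark \ref{rem:M_phi_m}, so the sequence $\var_{\phi_{(m)}}(x)$ is non-increasing. The ``in particular'' part then follows at once, since every $x\in\BV_0(\phi)$ satisfies $\var_{\phi_{(0)}}(x)<\infty$ and hence lies in $\BV(\phi)$.

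\textbf{Inequality $L\le \var_\phi^0(x)$.} Fix $\delta>0$. The key point is that $x\in\BV(\phi)$ together with $\phi(\N)=\infty$ lets only finitely many intervals of any family carry a jump larger than $\delta$. Precisely, there is $N=N(\delta)$, independent of $J$, such that no $J\in\cP_I$ contains more than $N$ intervals with $\abs{x(J_n)}>\delta$. Indeed, since $\phi(\N)=\infty$, lower semicontinuity and non-pathology give, for every $M$, a measure $\mu\in\mathcal M_\phi$ with $\mu(\{1,\dots,K\})>M$ for some $K$. Placing $K$ big-jump intervals at positions $1,\dots,K$ yields $\var_\phi(x)\ge \delta\mu(\{1,\ldots,K\})>\delta M$.

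Now take $m\ge N$, a family $J\in\cP_I$ and $\mu\in\mathcal M_{\phi_{(m)}}$, so that $\mu$ vanishes on $\{1,\dots,m\}$. Let $J'$ be obtained from $J$ by replacing the at most $N$ big-jump intervals with degenerate ones. Then
\[
\sum_n\mu(\{n\})\abs{x(J_n)}\le \sum_n\mu(\{n\})\abs{x(J'_n)}+\sum_{n\in B}\mu(\{n\})\abs{x(J_n)},
\]
where $B$ is the set of big-jump positions. The first term is at most $\var_\phi^\delta(x)$, since $J'\in\cP^\delta_I(x)$ and $\mu\in\mathcal M_\phi$. The second term is at most $2\norm{x}_\infty\sum_{n\in B,\,n>m}\phi(\{n\})\le 2N\norm{x}_\infty\sup_{n>m}\phi(\{n\})$. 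Here we use $\lim_n\phi(\{n\})=0$: letting $m\to\infty$ gives $L\le\var^\delta_\phi(x)$, and then letting $\delta\to0$ gives $L\le\var_\phi^0(x)$.

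\textbf{Inequality $\var_\phi^0(x)\le L$.} Fix $m$ and $\varepsilon>0$. For $J\in\cP_I^\delta(x)$ and $\mu\in\mathcal M_\phi$, split the sum at $m$:
\[
\sum_{n\le m}\mu(\{n\})\abs{x(J_n)}+\sum_{n>m}\mu(\{n\})\abs{x(J_n)}.
\]
The first part is at most $\delta\,\phi(\{1,\dots,m\})$. The second part is at most $\var_{\phi_{(m)}}(x)$, because $\mu$ restricted to $\{m+1,\dots\}$ belongs to $\mathcal M_{\phi_{(m)}}$ by Remark \ref{rem:M_phi_m}. Hence
\[
\var^\delta_\phi(x)\le\delta\,\phi(\{1,\ldots,m\})+\var_{\phi_{(m)}}(x).
\]
Letting $\delta\to0$ and then $m\to\infty$ gives $\var_\phi^0(x)\le L$.

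The main obstacle is the first inequality, specifically the uniform bound $N(\delta)$ on the number of large jumps. That bound is exactly where $\phi(\N)=\infty$ is needed, while $\phi(\{n\})\to0$ is what kills the contribution of the at most $N$ large jumps once they sit beyond position $m$.
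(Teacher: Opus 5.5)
Your proof is correct and follows essentially the same route as the paper. In one direction you use a uniform bound on the number of intervals with jump exceeding $\delta$, obtained from $\phi(\N)=\infty$, lower semicontinuity and non-pathology, and you kill their contribution beyond position $m$ with $\phi(\{n\})\to0$. In the other direction you use the same estimate $\var_\phi^\delta(x)\le\delta\,\phi(\{1,\ldots,m\})+\var_{\phi_{(m)}}(x)$ as the paper. The only cosmetic difference is that you bound the large-jump contribution directly by $2N\norm{x}_\infty\sup_{n>m}\phi(\{n\})$ via the singleton values of $\mu$, where the paper uses $\sup\{\phi(C):\abs{C}=K(x,\delta),\ \min C>m\}$.
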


\begin{proof}
First let us consider, for a fixed $\delta>0$, the family $C_\delta(x)$ consisting of all finite sequences $(I_1,I_2,\ldots,I_k)$ of nonoverlapping intervals such that for each $n\leq k$ we have $|x(I_n)|\geq\delta$. Let us denote 
\[
K(x,\delta)=\sup\{k:\text{there is }(I_1,I_2,\ldots,I_k)\in C_\delta(x)\}.
\]
It is easy to observe that for each $(I_1,I_2,\ldots,I_k)\in C_\delta(x)$ we have
\[
\phi(\{1,...,k\}) \leq \frac{\var_\phi(x)}{\delta}.
\]
Since $\phi(\N)=\infty$, $K(x,\delta)$ is finite.

Let us now take any sequence of nonoverlapping subintervals of $I$. Then for all but at most $K(x,\delta)$ of them we may say that $|x(J)|<\delta$. This leads to the inequality
\[
\var_{\phi_{(m)}}(x) \leq 2\Vert x\Vert_\infty \sup\{\phi(C): |C|=K(x,\delta)\wedge \min C>m \} + \var_\phi^\delta(x).
\]
Passing with $m$ to $\infty$ and using the assumption $\lim_{m\to\infty} \phi(\{m\})=0$ (which gives us 
\[
\lim_{m\to +\infty} \sup\{ \phi(C) : |C|=n \wedge \min C>m\} = 0.
\]
for all $n\in\N$), we have 
\[
\lim_{m\to+\infty} \var_{\phi_{(m)}}(x) \leq \var_\phi^\delta(x).
\]
Since $\delta$ was arbitrary, we get
 \[
\lim_{m\to+\infty} \var_{\phi_{(m)}}(x) \leq \var_\phi^0(x).
\]  

On the other hand, for any $\delta>0$ and $m\in\N$ we have
\[
\var_\phi^\delta(x) \leq \delta \phi(\{1,\ldots,m\}) + \var_{\phi_{(m)}}(x).
\]
Passing to the limit with $\delta\to 0^+$ we have
\[
\var_\phi^0(x) \leq  \var_{\phi_{(m)}}(x)
\]
for all $m\in\N$, which leads to 
\[
\var_\phi^0(x) \leq \lim_{m\to+\infty}  \var_{\phi_{(m)}}(x)
\]
and completes the proof.
\end{proof}

\section{Extreme cases}

Recall that $\phi(\N)<+\infty$ is equivalent to $\BV(\phi)=B(I)$, the space of all bounded functions (see \cite[Remark 3.13]{GKT}). The shape of $\BV_0(\phi)$ requires additional investigation:

\begin{proposition}
\label{prop:ext1}
Let $\phi$ be a non-pathological lsc submeasure. Then 
$\BV_0(\phi) = B(I)$ if and only if 
\[\lim_{n\to\infty} \phi(\N\setminus \{1,\ldots,n\})=0.\]
\end{proposition}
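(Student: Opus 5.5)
We prove both implications directly from the definitions, using the identity $\phi_{(m)}(\N)=\phi(\N\setminus\{1,\ldots,m\})$ together with the trivial estimate $|x(J_n)|\leq 2\norm{x}_\infty$.

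\emph{Sufficiency.} Assume $\lim_{n\to\infty}\phi(\N\setminus\{1,\ldots,n\})=0$ and fix $x\in B(I)$, $m\in\N$, $J\in\mathcal P_I$ and $\mu\in\mathcal M_{\phi_{(m)}}$. Since $\mu$ is a measure dominated by $\phi_{(m)}$, we have
\[
\sum_{n\in\N}\mu(\{n\})|x(J_n)|\leq 2\norm{x}_\infty\,\mu(\N)\leq 2\norm{x}_\infty\,\phi_{(m)}(\N)=2\norm{x}_\infty\,\phi(\N\setminus\{1,\ldots,m\}).
\]
Taking suprema over $\mu$ and $J$ gives $\var_{\phi_{(m)}}(x)\leq 2\norm{x}_\infty\,\phi(\N\setminus\{1,\ldots,m\})$, which tends to $0$. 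Hence $x\in\BV_0(\phi)$, so $\BV_0(\phi)=B(I)$.

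\emph{Necessity.} Suppose the limit (which exists, since the sequence is nonincreasing) equals $\varepsilon>0$; possibly $\varepsilon=\infty$. We exhibit a bounded function outside $\BV_0(\phi)$. The plan is to use one function that oscillates on every interval, namely the Dirichlet function $x=\chi_{\mathbb Q\cap I}$. For any $N$ we can choose nonoverlapping nondegenerate intervals $J_1,\ldots,J_N$ with one endpoint rational and the other irrational, so that $|x(J_n)|=1$ for $n\leq N$; the remaining $J_n$ are taken degenerate. Then
\[
\hat\phi_{(m)}(x(J))\geq\sup_{\mu\in\mathcal M_{\phi_{(m)}}}\mu(\{1,\ldots,N\})=\phi_{(m)}(\{1,\ldots,N\}),
\]
where the equality uses that $\phi_{(m)}$ is non-pathological, as noted after its definition. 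By lower semicontinuity, $\phi_{(m)}(\{1,\ldots,N\})\to\phi_{(m)}(\N)=\phi(\N\setminus\{1,\ldots,m\})\geq\varepsilon$ as $N\to\infty$. Therefore $\var_{\phi_{(m)}}(x)\geq\varepsilon$ for every $m$, so $x\notin\BV_0(\phi)$.

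We do not need $x\in\BV(\phi)$ here. The definition of $\BV_0(\phi)$ only asks for the limit of $\var_{\phi_{(m)}}(x)$ over arbitrary $x$, and this works whether $\varepsilon$ is finite or infinite.

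The only subtle step is passing from the sequence sum to $\phi_{(m)}(\{1,\ldots,N\})$. This requires non-pathologicity, to approximate $\phi_{(m)}$ on finite sets by measures $\mu\leq\phi_{(m)}$, and lower semicontinuity, to reach $\phi_{(m)}(\N)$. Both are available, so I expect no real obstacle.
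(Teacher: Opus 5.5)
Your proof is correct and follows the paper's argument. For sufficiency you use the same bound $\var_{\phi_{(m)}}(x)\le M\,\phi(\N\setminus\{1,\ldots,m\})$, and for necessity you use a bounded function with arbitrarily many disjoint unit jumps; the paper takes $\chi_{\{1/n:\,n\in\N\}}$ where you take the Dirichlet function, and it asserts $\var_{\phi_{(m)}}(x)=\phi(\N\setminus\{1,\ldots,m\})$ without comment, whereas your appeal to the non-pathologicity and lower semicontinuity of $\phi_{(m)}$ supplies the justification for that step.
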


\begin{proof}
Assume first that $\lim_{n\to\infty} \phi(\N\setminus \{1,\ldots,n\})=0$. The inclusion $\BV_0(\phi) \subseteq B(I)$ is obvious, so we only need to show the opposite one. Fix any $x\in B(I)$. Then there is $M>0$ such that $|x(J)|<M$ for all intervals $J\subseteq I$. Observe that
    \[
    \var_{\phi_{(m)}}(x) \leq M\cdot \phi(\N\setminus \{1,\ldots,m\})\to 0.\]
    Hence $x\in \BV_0(\phi)$.

Assume now that $\lim_{n\to\infty} \phi(\N\setminus \{1,\ldots,n\})\neq 0$. Define $x\in B(I)$ by
    \begin{equation*}
        x(t)=\begin{cases}
            1, & \text{if }t=\frac{1}{n}\text{ for some }n\in\N,\\
            0, & \text{if }t\in[0,1]\setminus\{\frac{1}{n}:n\in\N\}.
        \end{cases}
    \end{equation*}
Then
    \[
    \var_{\phi_{(m)}}(x)=1\cdot \phi(\N\setminus \{1,\ldots,m\})\not\to 0.\]
    Hence $x\notin \BV_0(\phi)$.
\end{proof}

\begin{proposition}\label{prop:ext2}
Let $\phi$ be a non-pathological lsc submeasure. Then
$\BV_0(\phi)$ consists only of constant functions if and only if 
\[\limsup_{n\to\infty}\phi(\{n\})>0.\]
\end{proposition}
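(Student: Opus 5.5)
We prove the two implications separately. Throughout we use a simple lower bound. If $x$ is non-constant, pick $s<t$ in $I$ with $c:=|x(t)-x(s)|>0$. Let $J=(J_n)$ have $J_n=[s,t]$ for a single index $n$ and all other $J_k$ degenerate. Then for every $m<n$ we get
\[
\var_{\phi_{(m)}}(x)\geq \hat\phi_{(m)}(x(J)) = c\,\sup\{\mu(\{n\}):\mu\in\mathcal{M}_{\phi_{(m)}}\} = c\,\phi_{(m)}(\{n\}) = c\,\phi(\{n\}).
\]
The middle equality uses non-pathology of $\phi_{(m)}$ applied to the singleton $\{n\}$. (One can also use Remark \ref{rem:M_phi_m}: the point mass $\phi(\{n\})\delta_n$ is a measure below $\phi$ vanishing on $\{1,\dots,m\}$.)

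\emph{($\Leftarrow$)} Suppose $\limsup_n\phi(\{n\})=\varepsilon>0$ and $x$ is non-constant. For each $m$ choose $n>m$ with $\phi(\{n\})>\varepsilon/2$. The bound above gives $\var_{\phi_{(m)}}(x)\geq c\varepsilon/2$ for all $m$. Hence $\var_{\phi_{(m)}}(x)\not\to 0$, so $x\notin\BV_0(\phi)$. Constant functions have all $|x(J_n)|=0$ and obviously lie in $\BV_0(\phi)$.

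\emph{($\Rightarrow$)} By contraposition, assume $\phi(\{n\})\to 0$; we exhibit a non-constant element of $\BV_0(\phi)$. Take $x=\chi_{\{1\}}$, i.e. $x(1)=1$ and $x(t)=0$ for $t<1$. Any nonoverlapping family contains at most one interval $J_n$ with $|x(J_n)|\neq 0$, namely one of the form $[a,1]$ with $a<1$, and then $|x(J_n)|=1$. So for each $J$ the sequence $x(J)$ is $0$ except possibly a $1$ at one index $n$. For $\mu\in\mathcal{M}_{\phi_{(m)}}$ this gives $\sum_k\mu(\{k\})|x(J_k)|\leq\mu(\{n\})$, which is $0$ if $n\le m$ and at most $\phi(\{n\})$ if $n>m$. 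Therefore
\[
\var_{\phi_{(m)}}(x)\leq\sup_{n>m}\phi(\{n\})\xrightarrow[m\to\infty]{}0,
\]
so $x\in\BV_0(\phi)$ and $x$ is non-constant.

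There is no genuine obstacle; the only point needing care is the identity $\sup_{\mu\in\mathcal{M}_{\phi_{(m)}}}\mu(\{n\})=\phi(\{n\})$ for $n>m$. It follows directly because the point mass at $n$ with weight $\phi(\{n\})$ is dominated by $\phi_{(m)}$: we have $\phi_{(m)}(C)\ge\phi(\{n\})$ whenever $n\in C$, by monotonicity of $\phi$. Note that the hypothesis $\phi(\N)=\infty$ is not needed. If $\phi(\N)<\infty$ and singletons tend to $0$, the example above still works.
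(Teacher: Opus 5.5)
Your proof is correct and follows essentially the same route as the paper. For one direction you place a point mass of weight $\phi(\{n\})$, $n>m$, on an interval where $x$ jumps, giving a lower bound of order $c\,\alpha/2$. For the other you use the test function $\chi_{\{1\}}$ with $\var_{\phi_{(m)}}(\chi_{\{1\}})\le\sup_{n>m}\phi(\{n\})$; this upper bound, rather than the paper's stated equality, is all that is needed.
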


\begin{proof}
Assume first that $\limsup_{n\to\infty}\phi(\{n\})=\alpha>0$. Let $x\in B(I)$ be a non-constant function. Then there exists $[a,b]\subset I$ such that $|x(b)-x(a)|=c>0$. Fix any $m\in\N$. Then we can find $n>m$ such that $\phi(\{n\})>\frac{\alpha}{2}$.  Let $\mu\in\mathcal{M}_{\phi_{(m)}}$ be given by $\mu(\{i\})=0$ for $i\not= n$ and $\mu(\{n\})=\phi(\{n\})$.  Then
    \[
    \var_{\phi_{(m)}}(x) \geq \mu(\{n\})|x([a,b])| > \frac{\alpha}{2} c.    \]
    Since $m$ was arbitrary, we can conclude that $x\notin \BV_0(\phi)$.

    Assume now that $\limsup_{n\to\infty}\phi(\{n\})=0$. Let $x\in B(I)$ be given by
    \begin{equation*}
        x(t)=\begin{cases}
            1, & \text{if }t=1,\\
            0, & \text{if }t\in[0,1).
        \end{cases}
    \end{equation*}
    Then $x$ is not constant and belongs to $\BV_0(\phi)$, since
    \[
    \var_{\phi_{(m)}}(x)=1\cdot \sup_{n>m}\phi(\{n\})\to 0.
    \]
\end{proof}

\section{Basic properties}

Note that for any permutation $\pi\colon\N\to\N$ the submeasure $\psi$ given by $\psi(C)=\phi(\pi[C])$ generates the same bounded variation space: $\BV(\psi)=\BV(\phi)$ (see \cite[Remark 3.15]{GKT}).

\begin{proposition}
    Let $\pi\colon\N\to\N$ be any permutation. Then the submeasure $\psi$ given by $\psi(C)=\phi(\pi[C])$ generates the same space of functions continuous in bounded variation, i.e. $\BV_0(\psi)=\BV_0(\phi)$.
\end{proposition}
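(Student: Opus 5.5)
The plan is to compare the tail variations $\var_{\psi_{(m)}}$ and $\var_{\phi_{(k)}}$ directly, using the relabelling trick behind \cite[Remark 3.15]{GKT}. First, if $\rho\le\sigma$ are non-pathological lsc submeasures (meaning $\rho(C)\le\sigma(C)$ for all $C$), then $\mathcal M_\rho\subseteq\mathcal M_\sigma$. Hence $\hat\rho\le\hat\sigma$ and $\var_\rho(x)\le\var_\sigma(x)$. In particular, by Remark \ref{rem:M_phi_m} the sequence $m\mapsto\var_{\phi_{(m)}}(x)$ is non-increasing, so the limit in the definition of $\BV_0(\phi)$ always exists in $[0,\infty]$. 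Note also that $\psi$ is again a non-pathological lsc submeasure: measures below $\psi$ are exactly the images $\mu\circ\pi$ of measures $\mu\le\phi$.

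\textbf{Key relabelling step.} For a submeasure $\sigma$ and the permuted submeasure $\sigma^\pi(C)=\sigma(\pi[C])$, we show $\var_{\sigma^\pi}(x)=\var_\sigma(x)$. Take $J=(J_n)\in\mathcal P_I$ and $\mu\in\mathcal M_{\sigma^\pi}$. Define the measure $\nu$ by $\nu(\{\pi(n)\})=\mu(\{n\})$, so that $\nu(C)=\mu(\pi^{-1}[C])\le\sigma(C)$. Define the intervals $J'_{\pi(n)}=J_n$, which form a rearrangement of the same nonoverlapping family, so $J'\in\mathcal P_I$. Then
\[
\sum_n\mu(\{n\})|x(J_n)|=\sum_k\nu(\{k\})|x(J'_k)|.
\]
The reverse inequality follows symmetrically using $\pi^{-1}$. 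Now observe that $\psi_{(m)}(C)=\phi(\pi[C]\cap\pi[\{m+1,\ldots\}])=\sigma_m^\pi(C)$, where $\sigma_m(D)=\phi(D\cap T_m)$ and $T_m=\pi[\{m+1,m+2,\ldots\}]$. Therefore
\[
\var_{\psi_{(m)}}(x)=\var_{\sigma_m}(x).
\]

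\textbf{Sandwiching $T_m$ between tails.} Let $k(m)=\max\pi[\{1,\ldots,m\}]$ and $j(m)=\min T_m-1$. Then
\[
\{k(m)+1,\ldots\}\subseteq T_m\subseteq\{j(m)+1,\ldots\}.
\]
This gives $\phi_{(k(m))}\le\sigma_m\le\phi_{(j(m))}$, and by monotonicity
\[
\var_{\phi_{(k(m))}}(x)\le\var_{\psi_{(m)}}(x)\le\var_{\phi_{(j(m))}}(x).
\]
Since $\pi$ is a bijection, $j(m)\to\infty$ as $m\to\infty$ (each value is taken only once), and also $k(m)\to\infty$. Because the sequence $\var_{\phi_{(k)}}(x)$ is monotone, the limits along $k(m)$ and along $j(m)$ both equal $\lim_k\var_{\phi_{(k)}}(x)$. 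So $\lim_m\var_{\psi_{(m)}}(x)=\lim_k\var_{\phi_{(k)}}(x)$, which gives $\BV_0(\psi)=\BV_0(\phi)$.

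The only delicate point is the relabelling step. One must check that the rearranged family $J'$ is still admissible and that $\nu$ really lies below $\sigma$ (not merely below $\phi$). Both follow directly from the definitions, since $\pi$ is a bijection and the family $\mathcal P_I$ does not depend on the order of the intervals.
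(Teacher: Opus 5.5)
Your proof is correct and uses the same mechanism as the paper's: push intervals and measures through $\pi$ and compare a tail of $\phi$ with the $\pi$-image of a tail of $\psi$. The organization is different, though.

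The paper proves only $\BV_0(\psi)\subseteq\BV_0(\phi)$ directly, with an $\varepsilon$. For $m>m_\phi=\max\pi[\{1,\ldots,m_\psi\}]$ it relabels a given $J$ and $\mu\in\mathcal M_{\phi_{(m)}}$ by hand to bound $\var_{\phi_{(m)}}(x)$ by a tail variation of $\psi$. It then gets the other inclusion by symmetry, via $\phi(C)=\psi(\pi^{-1}[C])$. Its $m_\phi$ is exactly your $k(m_\psi)$, so its key estimate amounts to your lower bound $\var_{\phi_{(k(m))}}(x)\le\var_{\psi_{(m)}}(x)$.

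You instead factor the relabelling out once as the invariance $\var_{\sigma^\pi}=\var_\sigma$, rewrite $\psi_{(m)}=\sigma_m^\pi$, and squeeze $\sigma_m$ between $\phi_{(k(m))}$ and $\phi_{(j(m))}$. The upper bound through $j(m)$ replaces the symmetry step. This gives the slightly stronger fact $\lim_m\var_{\psi_{(m)}}(x)=\lim_m\var_{\phi_{(m)}}(x)$ in $[0,\infty]$ for every $x$, with no $\varepsilon$ bookkeeping. It also avoids an index slip in the paper's write-up: there $\hat\mu$ is really shown to lie in $\mathcal M_{\psi_{(m_\psi)}}$, so $m_\psi$ should be chosen with $\var_{\psi_{(m_\psi)}}(x)<\varepsilon$.

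The price is the auxiliary submeasures $\sigma_m$, which the paper's route never needs. You should say in one line that they are again non-pathological and lsc; this holds because the measures below $\sigma_m$ are exactly the restrictions to $T_m$ of measures below $\phi$. That said, the defining formula for $\var_{\sigma_m}$ makes sense regardless.
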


\begin{proof}
Fix a permutation $\pi\colon\N\to\N$, $x\in \BV_0(\psi)$ and $\varepsilon>0$. Let $m_\psi\in\N$ be such that $\var_{\psi_{(m)}}(x)<\varepsilon$ for all $m>m_\psi$. Define $m_\phi=\max \pi[\{1,2,\ldots,m_\psi\}]$. We will show that $\var_{\phi_{(m)}}(x)<\varepsilon$ for all $m>m_\phi$.

Fix any $m>m_\phi$, $J=(J_1,J_2,\ldots)\in\mathcal{P}_I$ and $\mu\in\mathcal{M}_{\phi_{(m)}}$. Define $\hat{J}_n=J_{\pi(n)}$ for all $n\in\mathbb{N}$. Let also $\hat{\mu}$ be the measure given by $\hat{\mu}(\{n\})=\mu(\{\pi(n)\})$ for all $n\in\mathbb{N}$. Observe that $\hat{\mu}\in\mathcal{M}_{\psi}$, since
\begin{equation*}
\begin{split}
\hat{\mu}(C) & =\mu(\pi[C])\leq\phi_{(m)}(\pi[C])\\
          & =\phi(\pi[C]\cap\{m+1,m+2,\ldots\}) \\
          & \leq\phi(\pi[C\cap\{m_\psi+1,m_\psi+2,\ldots\}]) \\
          & =\psi(C\cap\{m_\psi+1,m_\psi+2,\ldots\})=\psi_{(m_\psi)}(C)
\end{split}
\end{equation*}
    for all $C\subseteq\mathbb{N}$, since 
    \begin{equation*}
\begin{split}
\pi[C]\cap\{m+1,m+2,\ldots\} & \subseteq \pi[C]\cap\{m_\phi+1,m_\phi+2,\ldots\} \\
          & \subseteq\pi[C\cap\{m_\psi+1,m_\psi+2,\ldots\}].
\end{split}
\end{equation*}
    Moreover, $\hat{\mu}(\{i\})=\mu(\{\pi(i)\})=0$ for all $i\leq m_{\psi}$ (by $m>m_\phi\geq \pi(i)$ and $\mu\in\mathcal{M}_{\phi_{(m)}}$; see Remark \ref{rem:M_phi_m}). Hence $\hat{\mu}\in\mathcal{M}_{\psi_{(m)}}$ by Remark \ref{rem:M_phi_m}.
    
    Note that
    \begin{equation*}
\begin{split}
\sum_{n\in\mathbb{N}}\mu(\{n\})|x(J_n)| & =\sum_{n>m}\mu(\{n\})|x(J_n)|\leq \sum_{n>m_\phi}\mu(\{n\})|x(J_n)| \\
          & \leq \sum_{k>m_\psi}\mu(\{\pi(k)\})|x(J_{\pi(k)})|=\sum_{k>m_\psi}\hat{\mu}(\{k\})|x(\hat{J}_k)| \\
          & \leq \var_{\psi_{(m)}}(x)<\varepsilon.
\end{split}
\end{equation*}
    This shows that $\var_{\phi_{(m)}}(x)<\varepsilon$ for all $m>m_\phi$. Hence $\BV_0(\psi)\subseteq \BV_0(\phi)$

    Since the above argument works for arbitrary bijections and we have $\phi(C)=\psi(\pi^{-1}[C])$, by the above we can conclude that $\BV_0(\phi)\subseteq \BV_0(\psi)$. Thus $\BV_0(\phi)=\BV_0(\psi)$.
\end{proof}

There is also a general observation that the difference in the level of sequence spaces immediately implies that $\BV(\phi)$ and $\BV_0(\phi)$ are different. Before stating it, let us make one observation.

\begin{remark}
If $\lim_{n\to\infty} \phi(\N\setminus \{1,\ldots,n\})=0$ then $\BV_0(\phi)=\BV(\phi)$. Indeed, $\lim_{n\to\infty} \phi(\N\setminus \{1,\ldots,n\})=0$ implies $\phi(\N)<\infty$. Then we get $\BV_0(\phi)=B(I)=\BV(\phi)$ by Proposition \ref{prop:ext1} and \cite[Proposition 3.18]{GKT}.
\end{remark}

In the next result we will need the following condition:
\begin{equation}\label{eq}
\forall_{x,y\in\mathbb{R}^\N}\left(\left(\forall_{n\in\N}\sum_{i=1}^n |x_i|\leq\sum_{i=1}^n |y_i|\right)\implies \hat{\phi}(x)\leq\hat{\phi}(y)\right).
\end{equation}
It already appeared in the literature in \cite[Theorem 4.6]{GKT}. The measures given by Waterman sequences as well as simple density submeasures (for definition see the next Section) both satisfy this condition (see \cite[proofs of Theorems 5.1 and 6.2]{GKT}).

We will also need the following definition introduced in \cite[Definition 3.1]{GKT}.

\begin{definition}
$m\FIN(\phi)$ is the subspace of $\FIN(\phi)$ consisting of all sequences with nonincreasing absolute values while $m\EXH(\phi)$ is the subspace of $\EXH(\phi)$ consisting of all sequences with nonincreasing absolute values. 
\end{definition}

\begin{theorem}
\label{thm:FINneqEXH}
Let $\phi$ be a non-pathological lsc submeasure such that $\lim_{n\to\infty} \phi(\N\setminus \{1,\ldots,n\})\neq 0$ and it satisfies condition (\ref{eq}). If $m\FIN(\phi)\setminus \EXH(\phi)$ then $\BV_0(\phi) \neq \BV(\phi)$.
\end{theorem}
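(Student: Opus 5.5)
We read the hypothesis as: there is a sequence $y=(y_n)\in m\FIN(\phi)\setminus\EXH(\phi)$. Replacing $y_n$ by $|y_n|$ does not change $\hat\phi$, so we may assume $y_1\geq y_2\geq\ldots\geq 0$. The plan is to build a function $x\in\BV(\phi)\setminus\BV_0(\phi)$ whose jumps are exactly the numbers $y_n$. We use a ``sawtooth of spikes'' at the points $t_n=1/n$ and set
\[
x(t)=\begin{cases} y_n, & t=\tfrac1n,\ n\in\N,\\ 0, & \text{otherwise}.\end{cases}
\]
Since $y$ is bounded ($y_1$ is its largest value), $x\in B(I)$.

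\textbf{Upper bound: $x\in\BV(\phi)$.} Fix $J=(J_k)\in\mathcal P_I$. For a nondegenerate interval $J_k=[a,b]$ we have $|x(J_k)|\leq \max(x(a),x(b))$, and $x(a),x(b)$ are values $y_{n}$ at endpoints. Each point $1/n$ is an endpoint of at most two intervals of $J$, since they are nonoverlapping. We therefore map each $J_k$ with $|x(J_k)|>0$ to an index $n(k)$ with $|x(J_k)|\leq y_{n(k)}$, each $n$ being used at most twice. Rearrange the nonzero values $|x(J_k)|$ decreasingly into a sequence $z$. Because $y$ is nonincreasing, for every $N$ the sum of the $N$ largest terms of $z$ is at most the sum of the largest $N$ terms of the sequence $(y_1,y_1,y_2,y_2,\ldots)$. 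That sum is at most $2\sum_{i\le N} y_i$. Two facts then remain. First, we need that $\hat\phi$ of the original sequence $x(J)$ is controlled by $\hat\phi$ of its decreasing rearrangement. This should follow from condition (\ref{eq}) applied to the pair ($x(J)$, rearrangement), since the partial sums of a decreasing rearrangement dominate those of the original. Second, applying (\ref{eq}) again with $2y$ gives $\hat\phi(x(J))\le 2\hat\phi(y)<\infty$. Hence $\var_\phi(x)\leq 2\hat\phi(y)$.

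\textbf{Lower bound: $x\notin\BV_0(\phi)$.} Take $J_n=[\frac{1}{n}-\varepsilon_n,\frac1n]$ with $\varepsilon_n>0$ small enough that these intervals are disjoint and the left endpoints are not of the form $1/k$. Then $x(J)=y$, so for every $m$ we get $\var_{\phi_{(m)}}(x)\ge\hat\phi_{(m)}(y)=\hat\phi(y\chi_{\{m+1,\ldots\}})$. Since $y\notin\EXH(\phi)$, this does not tend to $0$. (The hypothesis $\lim_n\phi(\N\setminus\{1,\dots,n\})\ne0$ is then automatically consistent; it is what makes $m\FIN\setminus\EXH$ nonempty possible.)

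\textbf{Main obstacle.} The delicate step is the upper bound. There we must justify comparing $\hat\phi(x(J))$ with $\hat\phi$ of a rearranged, doubled copy of $y$ using only (\ref{eq}). The key is to verify carefully the partial-sum domination $\sum_{i\le N}|x(J_i)|\leq 2\sum_{i\le N}y_i$ for every $N$, directly in the original order. This holds because the $N$ terms $|x(J_1)|,\dots,|x(J_N)|$ are bounded by $N$ values $y_{n(k)}$, each index repeated at most twice, and such a sum is at most $2(y_1+\dots+y_N)$ by monotonicity of $y$. Then (\ref{eq}) applies immediately, without any rearrangement step, using $\hat\phi(2y)=2\hat\phi(y)$.
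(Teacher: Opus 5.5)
Your proof is correct, and it takes a genuinely different route from the paper's.

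\textbf{The paper's route.} The paper first disposes of the case $\phi(\N)<\infty$ separately: there $\BV(\phi)=B(I)$, while $\BV_0(\phi)\neq B(I)$ by Proposition \ref{prop:ext1}. In the remaining case it uses a continuous piecewise linear zigzag with $x(1/2^k)=\sum_{n\le k}(-1)^{n+1}|r_n|$. The canonical partition $\hat J$ realises $x(\hat J)=r$. By a cited ``points of varying monotonicity'' principle, $\hat J$ dominates every partition $J$ in the sense $\sum_{i\le n}|x(J_i)|\le\sum_{i\le n}|r_i|$. Condition (\ref{eq}) then gives exactly $\var_\phi(x)=\hat\phi(r)$. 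The case split is needed for this construction: the alternating series defining $x(0)$ requires $r_n\to 0$. That is automatic for $r\in m\FIN(\phi)$ when $\phi(\N)=\infty$, since $\hat\phi(\chi_\N)=\phi(\N)$ by non-pathology, but not otherwise.

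\textbf{What your route buys.} Your spike function $x(1/n)=y_n$ is bounded by $y_1$ whether or not $y_n\to0$, so you need no case split. As you note, the hypothesis on $\phi(\N\setminus\{1,\ldots,n\})$ is then redundant, being implied by $m\FIN(\phi)\setminus\EXH(\phi)\neq\emptyset$. Your endpoint-counting argument is elementary and self-contained: each $1/n$ is an endpoint of at most two nondegenerate intervals of $J$, and $y$ is nonincreasing. This gives $\sum_{k\le N}|x(J_k)|\le 2\sum_{i\le N}y_i$, and homogeneity of $\hat\phi$ absorbs the factor $2$.

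\textbf{What it costs.} Your witness is discontinuous, and you only obtain $\var_\phi(x)\le 2\hat\phi(y)$. In the main case $\phi(\N)=\infty$, the paper's witness is continuous, so its argument additionally shows that $\BV(\phi)\setminus\BV_0(\phi)$ contains continuous functions. This is in the spirit of $C\ABV_c(I)\neq C\ABV(I)$ in Theorem \ref{thr1}.

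\textbf{A small cleanup.} The detour through a decreasing rearrangement in your upper-bound paragraph can simply be dropped. The direct partial-sum estimate in your ``main obstacle'' paragraph is what makes (\ref{eq}) apply.
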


\begin{proof}
Assume first that $\phi(\N)<\infty$. Then $\BV(\phi)=B(I)$ (by \cite[Proposition 3.18]{GKT}), while $\BV(\phi)\neq B(I)$ (by $\lim_{n\to\infty} \phi(\N\setminus \{1,\ldots,n\})\neq 0$ together with Proposition \ref{prop:ext1}). Hence $\BV_0(\phi) \neq \BV(\phi)$.

Assume now that $\phi(\N)=\infty$. Let $r=(r_n)_{n\in\N}\in m\FIN(\phi)\setminus \EXH(\phi)$. Define $x:I\to\mathbb{R}$ to be the piecewise linear function such that $x(1)=0$, $x(0)=\sum_{n=1}^\infty (-1)^{n+1}|r_n|$ and $x(\frac{1}{2^k})=\sum_{n=1}^k (-1)^{n+1}|r_n|$ for all $k\in\N$. Then $x$ is well-defined.

Let $\hat{J}=(\hat{J}_n)_{n\in\N}\in\mathcal{P}_I$ be the sequence of intervals given by $\hat{J}_1=[\frac{1}{2},1]$ and $\hat{J}_{k+1}=[\frac{1}{2^{k+1}},\frac{1}{2^{k}}]$ for all $k\in\N$. Then $|x(\hat{J}_k)|=|r_k|$ for all $k\in\N$. 

Observe that $x\notin \BV_0(\phi)$, since 
\[
\sup_{J\in\mathcal{P}_I}\hat{\phi}_{(m)}(x(J))\geq\hat{\phi}_{(m)}(x(\hat{J}))=\hat{\phi}_{(m)}(r)\not\to 0.
\]
On the other hand, by monotonicity of $(|r_n|)_{n\in\omega}$, for each $J\in\mathcal{P}_I$ we have
\[
\forall_{n\in\N}\sum_{i=1}^n |x(J_i)|\leq\sum_{i=1}^n |x(\hat{J}_i)|.
\]
Indeed, this is actually a simple observation, which may either be proved directly or deduced from the general observation (see \cite[Proposition 1.1]{PW-Nachrichten}) stating that when selecting the ends of the intervals we should select {\em points of varying monotonicity} to get higher value of the sum. Hence, by the assumption imposed on $\phi$, we have $\sup_{J\in\mathcal{J}_I}\hat{\phi}(x(J))=\hat{\phi}(x(\hat{J}))=\hat{\phi}(r)<\infty$.
\end{proof}

\section{Some thoughts on simple density submeasures}

In this Section we are interested in lsc submeasures of the form $\phi_g(C)=\sup_{n\in\N}\frac{|C\cap\{1,2,\ldots,n\}|}{g(n)}$, where $g:\N\to(0,\infty)$ satisfies the following conditions:
\begin{itemize}
    \item[(a)] $(g(n))_{n\in\N}$ is non-decreasing;
    \item[(b)] $\lim_n g(n)=\infty$;
    \item[(c)] $(\frac{n}{g(n)})_{n\in\N}$ does not tend to zero;
    \item[(d)] $(\frac{n}{g(n)})_{n\in\N}$ is non-decreasing.
\end{itemize}
Ideals of the form $\Exh(\phi_g)$ (for functions $g$ satisfying all the above requirements except the last one) have been extensively studied in \cite{simpleden3}, \cite{simpleden2} and \cite{simpleden1}. For that reason, we decided to write the last item separately despite the fact that (c) follows from (d). We will denote by $\mathcal{G}$ the set of all functions $g$ satisfying conditions (a)-(d). Note that in this case we have
$$\hat{\phi}_g(x)=\sup_{n\in\N}\frac{\sum_{i=1}^n|x_i|}{g(n)}$$
and
\[
\phi_{g,(m)}(C) = \sup_{n> m}\frac{|C\cap\{m+1,\ldots,n\}|}{g(n)},
\]
which leads to 
\[
\hat{\phi}_{g,(m)}(x) = \sup_{n> m}\frac{\sum_{i=m+1}^n|x_i|}{g(n)}.
\]

Then, for a bounded function $x\colon I\to\R$ there is
\begin{equation*}
\begin{split}
   \var_{\phi_{g,(m)}}(x) &  = \sup_{J\in\cP_I} \sup_{n>m}\frac{1}{g(n)}\sum_{i=1}^{n-m}|x(J_i)| = \sup_{J\in\cP_I} \sup_{n\in\N}\frac{1}{g(n+m)}\sum_{i=1}^{n}|x(J_i)| \\
   & =\sup_{J\in\cP_I} \sup_{n\in\N} \frac{1}{g(n)}\sum_{i=1}^{n}|x(J_i)|\frac{g(n)}{g(n+m)}.
\end{split}
\end{equation*}
This may be estimated by
\[
\var_{\phi_{g,(m)}}(x) \leq \sup_{n\in\N}\frac{g(n)}{g(n+m)} \var_{\phi_g}(x),
\]
which leads to the obvious conclusion that if
\[
\lim_{m\to +\infty} \sup_{n\in\N} \frac{g(n)}{g(n+m)} = 0,
\]
then $\BV_0(\phi_g) = \BV(\phi_g)$. However, by the following Remark, the above condition never holds. As we will see, it is a consequence of a much stronger observation: it appears that for any $g\in\cG$ we have $\BV_0(\phi_g) \neq \BV(\phi_g)$ (see Theorem \ref{thm:simple_density}). 

\begin{remark}
Let $g\in\mathcal{G}$ and $m\in\N$. Then $\sup_{n\in\N} \frac{g(n)}{g(n+m)} = 1$. Indeed, since $(\frac{n}{g(n)})_{n\in\N}$ is non-decreasing, we have
\[
\frac{n}{g(n)} \leq \frac{n+m}{g(n+m)},
\]
so using condition (a) we get
\[
\frac{n}{n+m} \leq \frac{g(n)}{g(n+m)} \leq 1.
\]
This means that $\sup_{n\in\N} \frac{g(n)}{g(n+m)} = 1$.
\end{remark}

\begin{lemma}
\label{lem:simple_density}
For each $g\in\cG$ satisfying $\lim_{n\to\infty}\frac{n}{g(n)}=\infty$ we have $m\FIN(\phi_g) \setminus \EXH(\phi_g)\neq\emptyset$. 
\end{lemma}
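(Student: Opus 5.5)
The plan is to build an explicit sequence $r=(r_n)$ that is nonnegative, nonincreasing, constant on blocks, and whose block sums are comparable to $g$ at the block endpoints. Concretely, I choose indices $0=n_0<n_1<n_2<\ldots$ and put $r_i=c_k$ for $n_{k-1}<i\le n_k$, where
\[
c_k=\frac{g(n_k)}{n_k-n_{k-1}}.
\]
With this choice the $k$-th block contributes exactly $g(n_k)$ to the partial sums $S_n=\sum_{i=1}^n r_i$. Two properties then need to be arranged: $\sup_n S_n/g(n)<\infty$, so that $r\in m\FIN(\phi_g)$, and $\hat\phi_{g,(m)}(r)\ge 1$ for all $m$, so that $r\notin\EXH(\phi_g)$.

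\emph{Choice of $n_k$ (inductive).} Having fixed $n_{k-1}$ and $c_{k-1}$, I pick $n_k$ so large that three conditions hold:
\begin{enumerate}
\item $n_k\ge 2n_{k-1}$;
\item $g(n_k)\ge 2g(n_{k-1})$, which is possible by (b);
\item $\frac{n_k}{2g(n_k)}\ge \frac{1}{c_{k-1}}$, which is possible because $n/g(n)\to\infty$.
\end{enumerate}
The third condition is exactly where the hypothesis $\lim_n n/g(n)=\infty$ enters. Together with the first, it gives $(n_k-n_{k-1})/g(n_k)\ge n_k/(2g(n_k))\ge 1/c_{k-1}$, that is, $c_k\le c_{k-1}$. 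Hence $r$ is nonincreasing.

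\emph{Upper bound.} Let $n_{k-1}<n\le n_k$. Condition 2 gives the geometric bound
\[
S_{n_{k-1}}=\sum_{j<k}g(n_j)\le 2g(n_{k-1})\le 2g(n),
\]
using (a) for the last step. For the current block I use (d): since $n/g(n)$ is nondecreasing, $g(n)\ge g(n_k)\,n/n_k$ for $n\le n_k$. Combined with the elementary inequality $\frac{n-a}{N-a}\le\frac nN$ for $0\le a<n\le N$, this yields
\[
c_k(n-n_{k-1})=g(n_k)\frac{n-n_{k-1}}{n_k-n_{k-1}}\le g(n_k)\frac{n}{n_k}\le g(n).
\]
Adding the two bounds gives $S_n\le 3g(n)$, so $\hat\phi_g(r)\le 3$ and $r\in m\FIN(\phi_g)$.

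\emph{Not in $\EXH$.} Fix $m$ and choose $k$ with $n_{k-1}\ge m$. Using the formula for $\hat\phi_{g,(m)}$ with the index $n=n_k$, we get
\[
\hat\phi_{g,(m)}(r)\ge \frac{\sum_{i=m+1}^{n_k} r_i}{g(n_k)}\ge \frac{c_k(n_k-n_{k-1})}{g(n_k)}=1.
\]
Thus $\hat\phi_g(r\cdot\chi_{\{m+1,m+2,\ldots\}})=\hat\phi_{g,(m)}(r)\not\to0$, so $r\notin\EXH(\phi_g)$.

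The main obstacle is the upper bound inside a block, i.e.\ controlling $S_n/g(n)$ at intermediate $n$ rather than only at the endpoints $n_k$. Condition (d) is precisely what handles this: it makes the linear interpolation of $g(n_k)$ on the block lie below $g$. The remaining bookkeeping, namely keeping $c_k$ nonincreasing while the blocks grow, is absorbed by the hypothesis $n/g(n)\to\infty$ through condition 3.
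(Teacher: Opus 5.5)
Your proof is correct. It produces the same kind of object as the paper: a block-constant nonincreasing $r$ with $\sup_n S_n/g(n)<\infty$ whose ratio $S_n/g(n)$ returns to at least $1$ along a sparse sequence. The mechanism, however, is different.

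The paper fixes the next value $s_{k+1}$ first. It takes $s_{k+1}<s_k$, small enough that the ratio at $n_k+1$ stays below $2$. It then runs the block until the stopping time at which $\sum_{j\le n}r_j/g(n)$ first reaches $1$. The bound $<2$ comes from (a): one extra step raises the ratio by at most $s_{k+1}/g(n_k+l+1)<s_0/g(1)=1$. The failure of $\EXH$ uses $S_{n_k}/g(n_k)\ge 1$ together with $S_m/g(n_k)\to 0$.

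You instead fix the endpoints first and choose each value so that its block contributes exactly $g(n_k)$. Your bound $S_n\le 3g(n)$ then has two sources. The geometric growth $g(n_k)\ge 2g(n_{k-1})$ handles the earlier blocks, and (d) handles the current one. The failure of $\EXH$ is immediate, because a single block already gives ratio $1$.

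The trade-off is this. The paper's argument uses only (a), (b) and $n/g(n)\to\infty$, so it also covers functions $g$ without (d), as in the simple density literature the paper cites. Your estimate on the current block rests on (d): without it, the linear interpolation of $g(n_k)$ across a block could exceed $g$ where $g$ is flat and then jumps. In exchange, you get explicit endpoints, an explicit constant, and a cleaner non-$\EXH$ step.

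One cosmetic point: for $k=1$, your conditions 2 and 3 refer to the undefined $g(n_0)=g(0)$ and $c_0$. Simply let $n_1$ be arbitrary. Then $S_{n_0}=0$, and the bound $c_1 n\le g(n)$ on the first block still follows from (d).
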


\begin{proof}
We will find a non-increasing sequence of positive reals $r\in \FIN(\phi_g) \setminus \EXH(\phi_g)$. 

We need to inductively construct sequences $(n_k)_{k\in\N_0}\in\N_0^{\N_0}$, $(s_k)_{k\in\N_0}\in(0,\infty)^{\N_0}$ and $(r_n)_{n\in\N}\in(0,\infty)^\N$ satisfying:
\begin{itemize}
    \item[(i)] $n_0=0$ and $s_0=g(1)$,
    \item[(ii)] $s_{k+1}<s_k$ and $n_{k+1}>n_k$ for all $k\in\N_0$,
    \item[(iii)] $r_n=s_{k+1}$ for all $n\in(n_k,n_{k+1}]$ and $k\in\N_0$,
    \item[(iv)] $\frac{\sum_{j=1}^{n}r_j}{g(n)}< 2$ for all $n\in\N$,
    \item[(v)] $\frac{\sum_{j=1}^{n_k}r_j}{g(n_k)}\geq 1$ for all $k\in\N$.
\end{itemize}
In the initial step just put $n_0=0$ and $s_0=g(1)$. Suppose now that all $n_i$ and $s_i$ for $i\leq k$ as well as all $r_j$ for $j\leq n_k$ are already defined. Pick any $s_{k+1}\in(0,\infty)$ satisfying $s_{k+1}<s_k$ and 
\[
s_{k+1}<2g(n_k+1)-\sum_{j=1}^{n_k}r_j.
\]
(Note that by (iv) $2g(n_k+1)-\sum_{j=1}^{n_k}r_j>0$.)
Denote 
\[
a^{(k)}_l=\frac{(\sum_{j=1}^{n_k}r_j)+l\cdot s_{k+1}}{g(n_k+l)}
\]
for all $l\in\N$. Let $n_{k+1}\in\N$ be minimal with the properties $n_{k+1}>n_k$ and $a^{(k)}_{n_{k+1}-n_k}\geq 1$. Such $n_{k+1}$ exists, since $\lim_{l\to\infty}g(n_k+l)=\infty$ together with
\[
\lim_{l\to\infty}\frac{l}{g(n_k+l)}=\lim_{l\to\infty}\frac{n_k+l}{g(n_k+l)}-\lim_{l\to\infty}\frac{n_k}{g(n_k+l)}=\infty
\]
imply that $\lim_{l\to\infty}a^{(k)}_l=\infty$. Define $r_n=s_{k+1}$ for all $n\in(n_k,n_{k+1}]$ and observe that items (ii) and (iii) are satisfied. In order to verify (v), note that
\[
\frac{\sum_{j=1}^{n_{k+1}}r_j}{g(n_{k+1})}=a^{(k)}_{n_{k+1}-n_k}\geq 1.
\]
It remains to show (iv) for $n\in(n_k,n_{k+1}]$. Observe that $\frac{\sum_{j=1}^{n}r_j}{g(n)}=a^{(k)}_{n-n_k}$ for all $n\in(n_k,n_{k+1}]$. Thus, it suffices to show that $a^{(k)}_l\leq 2$ for $l=1,\ldots,n_{k+1}-n_k$. By the second condition imposed on $s_{k+1}$ we get $a^{(k)}_1<2$. Moreover,
\begin{equation*}
\begin{split}
    a^{(k)}_{l+1}-a^{(k)}_l & =\frac{(\sum_{j=1}^{n_k}r_j)+l\cdot s_{k+1}+s_{k+1}}{g(n_k+l+1)}-\frac{(\sum_{j=1}^{n_k}r_j)+l\cdot s_{k+1}}{g(n_k+l)} \\
    & =\frac{(\sum_{j=1}^{n_k}r_j)+l\cdot s_{k+1})(g(n_k+l)-g(n_k+l+1))}{g(n_k+l)\cdot g(n_k+l+1)} \\
    & +\frac{s_{k+1}\cdot g(n_k+l)}{g(n_k+l)\cdot g(n_k+l+1)} \\
    & \leq \frac{s_{k+1}\cdot g(n_k+l)}{g(n_k+l)\cdot g(n_k+l+1)}=\frac{s_{k+1}}{g(n_k+l+1)}<\frac{s_{0}}{g(1)}=1
\end{split}
\end{equation*}
for all $l\in\N$ (by the fact that $(g(n))_{n\in\N}$ is non-decreasing). Thus, by the definition of $n_{k+1}$, 
\[
a^{(k)}_{l+1}<1+a^{(k)}_{l}<1+1=2
\]
for all $l<n_{k+1}-n_k$, which finishes the construction.

Once all $r_n$ are defined, observe that $r=(r_n)_{n\in\N}$ is a non-increasing sequence of positive reals. Moreover, $r\in \FIN(\phi_g)$, since $\sup_{n\in\N}\frac{\sum_{i=1}^n r_i}{g(n)}\leq 2$ by item (iv). On the other hand, $r\notin \EXH(\phi_g)$ as for any $m\in\N$ we get
\[
\sup_{n>m}\frac{\sum_{i=m+1}^n r_i}{g(n)}=\sup_{n>m}\left(\frac{\sum_{i=1}^n r_i}{g(n)}-\frac{\sum_{i=1}^m r_i}{g(n)}\right)\geq 1
\]
by item (v) and $\lim_{k\to\infty} g(n_k)=\infty$.
\end{proof}

\begin{theorem}
\label{thm:simple_density}
For each $g\in\cG$ we have $\BV_0(\phi_g) \neq \BV(\phi_g)$.
\end{theorem}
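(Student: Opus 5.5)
The plan is to reduce the statement to Theorem \ref{thm:FINneqEXH} applied to $\phi=\phi_g$, and to supply its hypotheses by splitting into two cases according to the behaviour of $\frac{n}{g(n)}$. Since $(\frac{n}{g(n)})$ is non-decreasing by (d), it either tends to $\infty$ or converges to a finite limit $L$. In the second case $L>0$ by (c), or simply because the sequence is positive and non-decreasing.

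First I would check the hypotheses of Theorem \ref{thm:FINneqEXH} that do not depend on the case.

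\emph{Non-pathological.} $\phi_g$ is a non-pathological lsc submeasure, because it is the supremum of the measures $\mu_n(C)=\frac{|C\cap\{1,\ldots,n\}|}{g(n)}$. Each $\mu_n$ satisfies $\mu_n\le\phi_g$, and lower semicontinuity is clear from the formula.

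\emph{Condition (\ref{eq}).} This holds because $\hat\phi_g(x)=\sup_n \frac{\sum_{i\le n}|x_i|}{g(n)}$ is monotone in the partial sums $\sum_{i\le n}|x_i|$. The paper also notes this, citing \cite{GKT}.

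\emph{Tails do not vanish.} We have $\lim_{n}\phi_g(\N\setminus\{1,\ldots,n\})\neq 0$. Indeed, for fixed $n$,
\[
\phi_g(\N\setminus\{1,\ldots,n\})\ge \sup_{k>n}\frac{k-n}{g(k)}=\sup_{k>n}\Bigl(\frac{k}{g(k)}-\frac{n}{g(k)}\Bigr).
\]
Since $g(k)\to\infty$ by (b), the term $\frac{n}{g(k)}\to 0$ as $k\to\infty$. Hence this supremum is at least $\lim_k \frac{k}{g(k)}$, which is either $\infty$ or $L>0$. So the tails are bounded below by $L$ (or are infinite) for every $n$.

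Next I would produce an element of $m\FIN(\phi_g)\setminus\EXH(\phi_g)$ in each case.

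\emph{Case $\frac{n}{g(n)}\to\infty$.} This is exactly Lemma \ref{lem:simple_density}.

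\emph{Case $\frac{n}{g(n)}\to L<\infty$.} Take the constant sequence $r=(1,1,\ldots)$, which is non-increasing. Then $\hat\phi_g(r)=\sup_n\frac{n}{g(n)}=L<\infty$, so $r\in m\FIN(\phi_g)$. On the other hand,
\[
\hat\phi_{g,(m)}(r)=\sup_{n>m}\frac{n-m}{g(n)}\ge L
\]
by the same computation as above, so $r\notin\EXH(\phi_g)$. Alternatively, in this case $\phi_g(\N)=L<\infty$, and the first paragraph of the proof of Theorem \ref{thm:FINneqEXH} already gives the conclusion directly, via Proposition \ref{prop:ext1}.

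Applying Theorem \ref{thm:FINneqEXH} in both cases gives $\BV_0(\phi_g)\neq\BV(\phi_g)$. The only real obstacle is the unbounded case, and it is already handled by Lemma \ref{lem:simple_density}. What remains is the small verification that the tail values of $\phi_g$ stay bounded away from $0$, which uses (b) together with (c)/(d).
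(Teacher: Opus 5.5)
Your case split and both reductions are exactly the paper's proof. In the bounded case the paper uses your ``alternative'' argument: $\phi_g(\N)<\infty$, non-vanishing tails, and Proposition \ref{prop:ext1}. In the unbounded case it combines Lemma \ref{lem:simple_density} with Theorem \ref{thm:FINneqEXH}, asserting condition (\ref{eq}) for $\phi_g$ by citing \cite{GKT}. The weak point is your verification of (\ref{eq}) --- and (\ref{eq}) itself is false for $\phi_g$.

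Writing $\phi_g=\sup_n\mu_n$ proves non-pathology. It only gives $\hat\phi_g(x)\ge\sup_n\frac{1}{g(n)}\sum_{i\le n}|x_i|$, because $\cM_{\phi_g}$ contains measures not dominated by any convex combination of the $\mu_n$. Take $g(n)=\sqrt n\in\cG$, which is an unbounded case. Let $\mu(\{1\})=\frac23$, $\mu(\{9\})=\frac13$, and $\mu=0$ elsewhere. Since $\mu(\N)=1$, $\phi_g(C)\ge1$ whenever $1\in C$, and $\phi_g(C)\ge\frac13$ whenever $9\in C$, we get $\mu\in\cM_{\phi_g}$. Let $x_1=1$, $x_9=8$, $x_i=0$ otherwise, and $y=\chi_{\{1,\ldots,9\}}$. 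The partial sums of $|x|$ are dominated by those of $|y|$, yet
\[
\hat\phi_g(x)\ge\tfrac23+\tfrac83=\tfrac{10}{3}>3=\phi_g(\{1,\ldots,9\})\ge\hat\phi_g(y).
\]
So the displayed formula for $\hat\phi_g$ is false, (\ref{eq}) fails, and Theorem \ref{thm:FINneqEXH} cannot be invoked as stated. Here $y$ is even non-increasing, which is exactly the configuration used in that theorem's proof.

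The same problem affects your primary route in the bounded case, which also needs (\ref{eq}). It fails already for $g(n)=n$: take $\mu(\{1\})=\frac23$, $\mu(\{3\})=\frac13$, $x=(1,0,2,0,\ldots)$ and $y=\chi_{\{1,2,3\}}$. Only your alternative argument is sound there.

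To close the unbounded case you need a rearrangement-invariant substitute for (\ref{eq}): a constant $K=K(g)$ with
\[
\hat\phi_g(z)\le K\sup_{n\in\N}\frac{1}{g(n)}\sum_{i\le n}z^*_i\quad\text{for all }z\in\R^\N,
\]
where $z^*$ is the non-increasing rearrangement of $(|z_i|)$.

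This suffices. The intervals of $J\in\cP_I$ can be listed in any order, so the majorization in the proof of Theorem \ref{thm:FINneqEXH} says precisely that $\sum_{i\le n}x(J)^*_i\le\sum_{i\le n}r_i$. Hence $\var_{\phi_g}(x)\le 2K$ by item (iv) of Lemma \ref{lem:simple_density}. This bound is also what really certifies $r\in\FIN(\phi_g)$ in that lemma. The lower bound $\hat\phi_{g,(m)}(r)\ge 1$ uses only the measures $\mu_n$, so it survives.

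The estimate itself needs an argument:
\begin{itemize}
\item Hardy--Littlewood plus Abel summation along the decreasing rearrangement of $\mu\in\cM_{\phi_g}$ reduce it to $\int_0^\infty g(|\{i:\mu(\{i\})>t\}|)\,dt\le K$.
\item That inequality follows from a greedy (matroid) choice of sets $D$ with $|D\cap\{1,\ldots,n\}|\le c\,g(n)$ for all $n$, so that $\phi_g(D)\le c$.
\item Condition (d) is used to compare $g(|C|)$ with the largest size of such a $D\subseteq C$.
\end{itemize}
As it stands, your proof, like the paper's, rests on a false claim in the unbounded case.
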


\begin{proof}
By condition (d), $\lim_{n\to\infty}\frac{n}{g(n)}$ exists. Assume first that it is equal to some $\alpha<\infty$ (note that $\alpha>0$ by condition (c)). Then $\phi_g(\N)=\sup_{n\in\N}\frac{n}{g(n)}=\alpha<\infty$, so $\BV(\phi_g)=B(I)$ by \cite[Proposition 3.18]{GKT}. On the other hand, 
\[
\lim_{n\to\infty} \phi_g(\N\setminus\{1,\ldots,n\})=\lim_{n\to\infty}\sup_{k>n}\frac{k-n}{g(k)}=\alpha>0.
\]
Hence, $\BV(\phi_g)\neq B(I)$ by Proposition \ref{prop:ext1} and we can conclude that $\BV_0(\phi_g) \neq \BV(\phi_g)$.

Assume now that $\lim_{n\to\infty}\frac{n}{g(n)}=\infty$. Then Lemma \ref{lem:simple_density} gives us $r\in m\FIN(\phi_g) \setminus \EXH(\phi_g)$. Since $\lim_{n\to\infty} \phi_g(\N\setminus\{1,\ldots,n\})=\infty$ (for the same reason as in the previous paragraph) and $\phi_g$ satisfies condition (\ref{eq}) (by \cite[proof of Theorem 5.1]{GKT}), Theorem \ref{thm:FINneqEXH} finishes the proof.
\end{proof}

\section{Other classes of submeasures}\label{sec:other:submeasures}

The general condition determining whether $x$ belongs to $\BV(\phi)$ or not,
\[
\sup_{J\in\cP_I} \sup_{\mu\in\cM_\phi} \sum_{k\in\N} \mu(\{k\})|x(J_k)| = \sup_{\mu\in\cM_\phi} \sup_{J\in\cP_I}  \sum_{k\in\N} \mu(\{k\})|x(J_k)| < +\infty,
\]
actually leads us to the observation that 
\[
\BV(\phi) \subseteq \bigcap_{\mu\in\cM_\phi} \BV(\mu).
\]

It appears that in some cases we may characterize $\BV(\phi)$ in terms of $\BV(\mu)$, where $\mu$ are measures. 

\begin{example}
Let $\N=I_1\cup I_2$, where $I_1=\{ 2k-1 : k\in\N\}$ and $I_2 = \{2k: k \in\N\}$. Let us take two Waterman sequences $A=(a_n)$ and $B=(b_n)$ and define the submeasure $\phi_{A,B}$ by
\[
\phi_{A,B}(C)= \max\left\{ \sum_{k\in C\cap I_1} a_{\frac{k+1}{2}}, \sum_{k\in C\cap I_2} b_{\frac{k}{2}} \right\}
\]
for $C\subseteq\N$. We can show that $\BV(\phi_{A,B}) = ABV\cap BBV$ with the norm equal to
\[
\Vert x\Vert_{\BV(\phi_{A,B})} = \max\{ \Vert x\Vert_{\ABV}, \Vert x\Vert_{\BBV} \}.
\]
This norm is obviously equivalent to the norm given by
\[
 \Vert x\Vert_{\ABV} + \Vert x\Vert_{\BBV}. 
\]
Then by \cite[Theorem 6]{Per} we know that 
\[
\BV(\phi_{A,B}) = \BV(\phi_{C})
\]
is actually a Waterman space induced by the sequence $C=A+B$.
This means that $\BV(\phi_{A,B})=\BV_0(\phi_{A,B})$ if and only if the Shao\--Sablin index of $A+B$ is strictly smaller than $2$.

The interesting observation is that $\ABV\neq \BBV$ implies that $\FIN(\phi_{A,B})\neq \FIN(\phi_C)$. Without the loss of generality let us assume that $\ABV\setminus\BBV\neq\emptyset$. Then, by Theorem 6.2. from \cite{GKT} there must be $m\FIN(\phi_A)\setminus m\FIN(\phi_B)\neq \emptyset$. Hence there exists a sequence $x\in\FIN(\phi_A)\setminus\FIN(\phi_B)$ such that $|x_n|$ is nonincreasing.  Now we can use it to build a sequence $x'$ supported on $I_1$ (i.e. such that $x'_k=0$ for $k\in I_2$) in such way that $x'$ belongs to $\FIN(\phi_{A,B})$ but not to $\FIN(\phi_C)=\FIN(\phi_A)\cap\FIN(\phi_B)$. 

Indeed, take $x\in m\FIN(\phi_A)\setminus m\FIN(\phi_B)$. Then $\sum_{k\in\N} |x_k|a_k<\infty$ and $\sum_{k\in\N} |x_k|b_k=\infty$. Then, since both $|x_n|$ and $b_n$ are nonincreasing, there must be $\sum_{k\in\N}|x_{2k-1}|b_{2k-1}=\infty$.
Now let us  put $x'_k=x_k$ for $k\in I_1$ and $x'_k=0$ for $k\in I_2$. Then we get that $x'\not\in\FIN(\phi_B)$ as 
$$\sum_{k\in\N}|x'_k|b_k= \sum_{k\in\N}|x_{2k-1}|b_{2k-1}=\infty.$$
On the other hand, $x'\in\FIN(\phi_{A,B})$ since $\sum_{k\in I_2}|x'_k|b_{\frac{k}{2}}=0$ while $\sum_{k\in I_1}|x'_k|a_{\frac{k+1}{2}}$ is equal to 
$$\sum_{k\in\N} |x_{2k-1}|a_k, $$
which is smaller than $\sum_{k\in\N} |x_{k}|a_{k}<\infty$.

This shows that if $\ABV\neq\BBV$ there must be $\FIN(\phi_{A,B})\neq\FIN(\phi_C)$.

From the observation above it follows that in the level of bounded variation spaces we can get the same space from essentially different submeasures.
\end{example}

This may raise a question if we may say anything about the Shao-Sablin index of the sequence $A+B$ based on the indexes of sequences $A$ and $B$. We are going to show that there is a measure given by a Waterman sequence $A$ with $S(\phi_A)=2$ which is dominated by a measure given by a Waterman sequence $B$ with $S(\phi_B)<2$. Hence, it is not enough to show that $\max\{S_A,S_B\}=2$ to be sure that $S_{A+B}=2$.
\begin{example}
We will construct a proper Waterman sequence $W$ with some interesting properties. It will be constantly equal to $\frac{1}{\sqrt{2l_n}}$ on each segment $I_n=\{2l_{n-1}+1,...,2l_{n}\}$ for $i=1,2,...$, where $l_n$ are selected according to the procedure described below.
\newline
Start with $l_0=0$ and $l_1=1$. Then $I_1=\{1,2\}$ and $w_1=w_2 = \frac{1}{\sqrt{2}}$. Now, having the sequence $l_i$ defined for $i\leq n$, we also have $w_k$ defined for $k\in\bigcup_{i\leq n} I_i = \{1,....,2 l_n\}$ (namely, $w_k=\frac{1}{\sqrt{2l_i}}$ for $k\in I_i$). We are going to find a value $l_{n+1}\in\N$ in such a way that 
\begin{itemize}
    \item[(i)] $l_{n+1}> 2l_n$;
    \item[(ii)] $1 \le W_{l_{n+1}}\le 1 + \frac{1}{n}$, where
    \[
    W_j = \frac{\sum_{k=1}^{2l_n} w_k+(j-2l_n)\frac{1}{\sqrt{2j}}}{\sqrt{\frac{j}{2}}}
    \]
    for $j>2l_n$.
\end{itemize}
This is possible, because for any $j> 2l_n$ we have
\[
\sum_{k=1}^{2l_n} w_k+(j-2l_n)\frac{1}{\sqrt{2j}} = K_n + \sqrt{\frac{j}{2}} - \frac{\sqrt{2}l_n}{\sqrt{j}},
\]
where $K_n = \sum_{k=1}^{2l_{n}} w_k$, which means that 
\[
 W_{j} = 1 + \frac{K_n - \frac{\sqrt{2}l_n}{\sqrt{j}}}{\sqrt{\frac{j}{2}}}.
\]
We can see that for fixed $K_n$ and $l_n$ and $j$ big enough we have $K_n - \frac{\sqrt{2}l_n}{\sqrt{j}}>0$ and $\frac{K_n}{\sqrt{\frac{j}{2}}}<\frac{1}{n}$. Thus, we can find $l_{n+1}$ satisfying conditions (i) and (ii). 
\newline
Put $w_k=\frac{1}{\sqrt{2l_{n+1}}}$ for all $k\in I_{n+1}=\{2l_n+1,\ldots,2l_{n+1}\}$. Observe that 
    \[
    W_{l_{n+1}} = \frac{\sum_{k=1}^{l_{n+1}} w_k}{\sum_{k=l_{n+1}+1}^{2l_{n+1}} w_k}
    \]
since 
\[
\sum_{k=l_{n+1}+1}^{2l_{n+1}} w_k = l_{n+1}\frac{1}{\sqrt{2l_{n+1}}} = \sqrt{\frac{l_{n+1}}{2}}
\]
and
\[
\sum_{k=1}^{l_{n+1}} w_k =\sum_{k=1}^{2l_{n}} w_k + \sum_{k=2l_n+1}^{l_{n+1}} \frac{1}{\sqrt{2l_{n+1}}} = \sum_{k=1}^{2l_n} w_k+(l_{n+1}-2l_n)\frac{1}{\sqrt{2l_{n+1}}}.
\]
\newline
Given that 
\[
\lim_{n \to\infty} \frac{\sum_{k=1}^{l_{n+1}} w_k}{\sum_{k=l_{n+1}+1}^{2l_{n+1}} w_k}=1,\]
the sequence $W$ thus constructed has the Shao-Sablin index equal to $2$, and yet is majorized by the sequence $u_n=\frac{1}{\sqrt{n}}$, the Shao-Sablin index of which equals $\sqrt{2}$. Moreover, we can see that $W$ is a Waterman sequence (as $\sum_{i=1}^{2l_n} w_i \geq \sum_{i=1}^{2l_n} \frac{1}{\sqrt{2l_n}}=\sqrt{2l_n}$) and it is proper (as $\lim_{k\to\infty}w_k=0$).
\end{example}

The next example refers to much more complex situation where we have a submeasure $\phi$ which is not a measure, it is generated by an infinite sequence of submeasures,  but for which we know that there is $\EXH(\phi)=\FIN(\phi)$.

\begin{example}\label{ex:JT}    
Following the example given in Farah's book \cite[Example~1.11.1]{farah-book}, let us consider a nonpathological lsc submeasure for which $\EXH(\phi)=\FIN(\phi)$ but which is not induced by a Waterman sequence. It appears we can observe some interesting properties of the associated $\BV(\phi)$ space. Let 
us consider the disjoint intervals in $\N$ given by $I_n=[2^n, 2^{n+1})$ and a related submeasure $\phi_n(C)=\frac{1}{n^2} \min\{|C\cap I_n|, n\}$ for $C\subset\N$. Then let us define a submeasure 
\[
\phi(C)=\sum_{n\in\N}\phi_n(C).
\]
\noindent
Let us take any measure $\mu\in\cM_{\phi}$. Then $\mu_n = \mu|_{I_n}$ (i.e. measure $\mu$ truncated to the set $I_n$) satisfies 
\begin{itemize}
    \item $\mu_n(I_n)\leq \frac{1}{n}$ (without the loss of generality we may assume that $\mu_n(I_n)= \frac{1}{n}$);
    \item $\mu_n(\{k\})\leq \frac{1}{n^2}$ for all $k\in I_n$.
\end{itemize}

Let us now take a bounded function $x\colon I\to\R$ and any partition $J=(J_n)\in\mathcal{P}_I$ of the domain $I$, and the corresponding sequence $(|x(J_n)|)_{n\in\N}$. We can see that 
\[
\sup_{\mu\leq \phi} \sum_{k\in\N} \mu(\{k\})|x(J_k)| = \sup_{\mu\leq \phi} \sum_{n\in\N} \sum_{k\in I_n} \mu_n(\{k\})|x(J_k)|. 
\]
We can easily see that for each fixed $I_n$ the biggest value of the sum $\sum_{k\in I_n} \mu_n(\{k\})|x(J_k)|$ is achieved when the measure $\mu_n$ admits the highest possible values on singletons and when it is supported on such $n$-element subset $C_n\subset I_n$ that it corresponds to the indices from $I_n$ with $n$ highest values of $|x(J_k)|$. Hence, instead of taking the supremum over all measures $\mu\in\cM_\phi$, we may take supremum over measures $\mu$, which truncated to $I_n$ are supported on $n$-element subset $C_n\subset I_n$ and attain value $\frac{1}{n^2}$ on the atoms in the support. Let us denote this family of measures as $\cM_\phi'$.

We can see that for a bounded function $x$
\[
\var_\phi(x) = \sup_{J\in \mathcal{P}_I}\sup_{\mu\in\cM_\phi} \sum_{k\in\N} \mu(\{k\})|x(J_k)| = \sup_{J\in \mathcal{P}_I}\sup_{\mu\in\cM_\phi'} \sum_{k\in\N} \mu(\{k\})|x(J_k)|=
\]
\[
\sup_{\mu\in\cM_\phi'} \sup_{J\in \mathcal{P}_I}\sum_{k\in\N} \mu(\{k\})|x(J_k)|.
\]

Assume now that $x\in\BV(\phi)$. We can see that for any  partition $J\in \mathcal{P}_I$ the sequence $(|x(J_k)|)_{k\in\N}$ satisfies $\lim_{k\to+\infty}|x(J_k)| = 0$. 
If there are infinitely many intervals for which $|x(J_k)|>0$ we may remove such  intervals $J_k$ that $|x(J_k)|=0.$ This means that for any situation we can find the permutation $\sigma\colon\N\to\N$ such that $|x(J_{\sigma(k)})|\geq |x(J_{\sigma(k+1)})|$. 

Now we can see that for each of the measures $\mu\in\cM_\phi'$ and any partition $J\in \mathcal{P}_I$ we may add degenerated intervals in such a way that we have a~new partition $(J'_k)_{k\in\N}$ such that for any $k\in\N$ with $\mu(\{k\})=0$ we have $|x(J'_k)|=0$. Moreover, by the remark in the previous paragraph, we can assume that if $M\subset\N$ is the support of the measure $\mu$, then for $k,l\in M$ and $k\leq  l$ we have $|x(J'_k)|\geq |x(J'_l)|$. As we can see 
\[
\sum_{k\in\N} \mu(\{k\}) |x(J_k)| \leq \sum_{k\in\N} \mu(\{k\}) |x(J'_k)|.
\]
This means that for a fixed measure $\mu\in\cM_\phi'$ we have
\[
\sup_{J\in \mathcal{P}_I} \sum_{k\in\N} \mu(\{k\})|x(J_k)| = \sup_{J\in \mathcal{P}_I} \sum_{k\in\N} \mu_0(\{k\})|x(J_k)|
\]
where $\mu_0$ is a Waterman measure corresponding to the sequence
\[
A=\left(1,\frac{1}{4},\frac{1}{4},\frac{1}{9},\frac{1}{9},\frac{1}{9},\ldots \right).
\]

This means that $\BV(\phi)=ABV$, so the bounded variation space for a submeasure $\phi$ is actually a Waterman $ABV$ space. 

As we can see, the Shao-Sablin index for this sequence equals to $1$, hence $BV(\phi)=\BV_0(\phi)$, which by Theorem \ref{thm:FINneqEXH} above implies that $m\EXH(\phi)=m\FIN(\phi)$.
\end{example}

\begin{question}
Do there exist spaces $\BV(\phi)$ which are not Waterman spaces for such a submeasure $\phi$ that $m\EXH(\phi)=m\FIN(\phi)$ or even $\EXH(\phi)=\FIN(\phi)$? 
\end{question}

The previous example may be generalized to other sequences of sets $I_n$ covering $\N$ and different definitions of submeasures $\phi_n$ supported on $I_n$. 

\begin{example}\label{ex:JT2}   Let us take a family $I_n=[2^n, 2^{n+1})$ as in the Example \ref{ex:JT}.  
Let us define
\[
\phi_n(C) = a_n\min\{|C\cap I_n|, 1\}
\]
for $C\subseteq \N$ and fixed sequence $A=(a_n)_{n\in\N}\subset(0,+\infty)$ converging monotonically to $0$. Let us take the submeasure $\phi_A$ given by
\[
\phi_A(C) = \sum_{n\in\N} \phi_n(I_n\cap C).
\]

We can see that for ideals generated by a submeasure $\phi_A$ we have $\Exh(\phi_A)=\Fin(\phi_A)$. We can also easily check that for sequence spaces we have $\EXH(\phi_A)=\FIN(\phi_A)$.

As we can see (similarly as above), the space $\BV(\phi_A)$ is actually equal to the Waterman space $ABV$ generated by the sequence 
\[
(a_1,a_2,a_2,a_3,a_3,a_3,a_3,,a_4,\ldots).
\] 
This means that whether  we have $\BV(\phi_A)=\BV_0(\phi_A)$ or not depends on the Shao\--Sablin index of this sequence.
\end{example}

\section{Some thoughts on the Shao\--Sablin index}

As it was mentioned in the Subsection \ref{sec:cont:LBV}, in the case of Waterman spaces the fact whether $\Lambda \BV = \Lambda \BV_c$ is characterized by the value of the Shao\--Sablin index of the sequence $\Lambda$. We may ask if a similar indicator may be used for the more general case of  $\BV_0(\phi)$ spaces. 

\begin{definition}\label{Shao-Sablin-submeasure}
    Let $\phi$ be a non-pathological lsc submeasure. The value
    \[
    S(\phi) = \limsup_{n\to +\infty} \frac{\phi(\{1,\ldots,2n\})}{\phi(\{1,\ldots,n\})}
    \]
    is called the Shao-Sablin index of $\phi$. 
\end{definition}

In the general case it is difficult to control the value of this coefficient, but with an additional assumption (which does not look like an exotic one) we may say something:

\begin{remark}
\label{rem:SSindex}
    The assumption 
\begin{equation*}
    \phi(\{1,...,n\})\geq \sup\{ \phi(C) : C\subseteq\N \wedge |C|=n \}
\end{equation*}
guarantees that $S(\phi)\leq 2$.
\end{remark}

\begin{proposition}
\label{prop:SSindex}
For each $s\in[1,2]$ there exists $g\in\cG$ such that the Shao-Sablin index of the submeasure $\phi_g$ is equal to $s$.     
\end{proposition}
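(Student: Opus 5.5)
First compute $\phi_g(\{1,\ldots,n\})$. By condition (d), $\frac{k}{g(k)}$ is non-decreasing, so
\[
\phi_g(\{1,\ldots,n\})=\sup_{k}\frac{\min(k,n)}{g(k)}=\frac{n}{g(n)},
\]
since for $k\le n$ the ratio is at most $\frac{n}{g(n)}$ by (d), and for $k>n$ it is $\frac{n}{g(k)}\le\frac{n}{g(n)}$ by (a). Hence
\[
S(\phi_g)=\limsup_{n\to\infty}\frac{2n/g(2n)}{n/g(n)}=2\limsup_{n\to\infty}\frac{g(n)}{g(2n)}.
\]
Conditions (a) and (d) give $\frac{n}{2n}\le\frac{g(n)}{g(2n)}\le 1$, so automatically $S(\phi_g)\in[1,2]$, consistent with the claim. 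The task is therefore to realize every value $\limsup_n \frac{g(n)}{g(2n)}=s/2\in[\frac12,1]$ with some $g\in\cG$.

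For $s\in(1,2)$ I would use a power function $g(n)=n^{\beta}$ with $\beta=\log_2(2/s)\in(0,1)$. Then (a) and (b) hold since $\beta>0$, and (c), (d) hold since $n^{1-\beta}$ is increasing. Moreover $\frac{g(n)}{g(2n)}=2^{-\beta}=s/2$ for every $n$, giving $S(\phi_g)=s$.

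The endpoints need separate choices.
\begin{itemize}
\item For $s=1$ (so $\beta=1$, which would violate (b) only if $g$ were constant), I take $g(n)=n$. Then (a)--(d) hold: $n/g(n)=1$ is non-decreasing and does not tend to $0$, and $g\to\infty$. The ratio is exactly $\frac12$, so $S=1$.
\item For $s=2$ ($\beta=0$ would violate (b)), I take a slowly growing $g$, for instance $g(n)=\log_2(n+1)$ or $g(n)=1+\log n$. Then $\frac{g(n)}{g(2n)}\to 1$, so $S=2$. Conditions (a) and (b) are clear.
\end{itemize}

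The only step needing care is (d) for the logarithmic choice: I must check that $n/(1+\log n)$ is non-decreasing on $\N$. The derivative of $t/(1+\log t)$ is $\log t/(1+\log t)^2\ge 0$ for $t\ge1$, so this holds. If a discrete nuisance arises, I will replace $g$ by $n\mapsto\min(n,1+\log n)$ or shift the argument. I expect no real obstacle beyond verifying conditions (a)--(d) for these explicit functions.
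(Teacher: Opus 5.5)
Your proof is correct and follows the paper's route: power functions $g(n)=n^{a}$ with $a=1-\log_2 s$ for $s\in[1,2)$, and a logarithm (the paper uses $\ln(n+1)$) for $s=2$. The only difference is that you derive $\phi_g(\{1,\ldots,n\})=n/g(n)$ and the bound $\frac12\le g(n)/g(2n)\le 1$ directly from (a) and (d); the paper simply asserts the former and invokes Remark \ref{rem:SSindex} to get $S(\phi_g)\le 2$.
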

\begin{proof}
For $s\in[1,2)$, let $a=1-\log_2 s$. Then $2^{1-a}=s$ and $a\in (0,1]$. Put $g(n)=n^a$. It is easy to check that $g\in\cG$ (note that in the case of $a=0$ we would get $g(n)=1$, which does not satisfy (b)). Moreover, $\phi_g(\{1,\ldots,n\})=\frac{n}{g(n)}=n^{1-a}$ for all $n\in\N$. Hence, 
\[
S(\phi_g)=\limsup_{n\to\infty}\frac{\frac{2n}{(2n)^a}}{\frac{n}{n^a}}=\frac{2}{2^a}=2^{1-a}=s.
\]

For $s=2$, let $g(n)=\ln (n+1)$ for all $n\in\N$. Then $g\in\cG$ and $\phi_g(\{1,\ldots,n\})=\frac{n}{g(n)}=\frac{n}{\ln (n+1)}$ for all $n\in\N$. Moreover,
\[
\frac{\phi_g(\{1,\ldots,2n\})}{\phi_g(\{1,\ldots,n\})}=2\frac{\ln (n+1)}{\ln (2n+1)}\geq 2\frac{\ln (n+1)}{\ln (2n+2)}=2\frac{\ln (n+1)}{\ln (n+1)+\ln 2}\to 2.
\]
Since $S(\phi_g)\leq 2$ by Remark \ref{rem:SSindex}, we obtain $S(\phi_g)=2$.
\end{proof}

Summing up:
\begin{itemize}
\item For $\phi=\phi_A$, where $A$ is a Waterman sequence, the Shao-Sablin index recognizes between the situation $\BV_0(\phi)=\BV(\phi)$ and $\BV_0(\phi)\neq\BV(\phi)$ (by Theorem \ref{thr1}).
\item  For $\phi=\phi_g$, where $g\in\cG$, we have always $\BV_0(\phi)\neq\BV(\phi)$ (by Theorem \ref{thm:simple_density}), while the Shao-Sablin index can have any possible value in $[1,2]$ (by Proposition \ref{prop:SSindex}).
    \item Consequently, the Shao-Sablin index alone does not recognize between the situation $\BV_0(\phi)=\BV(\phi)$ and $\BV_0(\phi)\neq\BV(\phi)$.
    \item Examples \ref{ex:JT}  and \ref{ex:JT2}  from Section \ref{sec:other:submeasures} show that the Definition \ref{Shao-Sablin-submeasure} is actually useless: when we look at the value
\[
\frac{\phi(\sum_{k=1}^n I_k)}{\phi(\sum_{k=1}^{n-1} I_k)} = 1 + \frac{\phi_n(I_n)}{\sum_{k=1}^{n-1}\phi_k(I_k)}
\]
we can see that it tends to $1$ as $n\to+\infty$. But as we could see both $\BV(\phi)=\BV_0(\phi)$ and $\BV(\phi)\neq\BV_0(\phi)$ may happen. Which situation is actually there depends on the Shao\--Sablin index of some {\em effective} sequence $A$ such that $\BV(\phi)=ABV$.
\end{itemize}

\section*{Acknowledgment}
The work of Jacek Gulgowski was supported by the project ``Center for Trustworthy Artificial Intelligence for Life Sciences" implemented under the International Research Agendas programme of the Foundation for Polish Science co-financed by the European Union under the European Funds for Smart Economy 2021-2027 (FENG).

\bibliographystyle{amsplain}
\bibliography{references.bib}

\end{document}